    \setlist[enumerate]{label=(\roman*),noitemsep,topsep=3pt} % modify the default enumerate list: this is for theorems
    \setlist[itemize]{label={$\cdot$},noitemsep,topsep=3pt} % modify the default enumerate list: this is for theorems
\definecolor{gray}{rgb}{0.5,0.5,0.5}
    \newdimen\nodeSize
    \newdimen\nodeDist
    \tikzset{
        position/.style args={#1:#2 from #3}{
            at=(#3.#1), anchor=#1+180, shift=(#1:#2)
        }
    }
\small\color{gray},
\newcommand\ackname{Acknowledgements}
  \newenvironment{acknowledgements}{%
      \titlepage
      \null\vfil
      \@beginparpenalty\@lowpenalty
      \begin{center}%
        \bfseries \ackname
        \@endparpenalty\@M
      \end{center}}%
     {\par\vfil\null\endtitlepage}
  \newenvironment{acknowledgements}{%
      \if@twocolumn
        \section*{\abstractname}%
      \else
        \small
        \begin{center}%
          {\bfseries \ackname\vspace{-.5em}\vspace{\z@}}%
        \end{center}%
        \quotation
      \fi}
      {\if@twocolumn\else\endquotation\fi}
\newtheorem{theorem}{Theorem}[section]
\newtheorem{corollary}[theorem]{Corollary}
\newtheorem{proposition}[theorem]{Proposition}
\newtheorem{lemma}[theorem]{Lemma}
\theoremstyle{definition}
\newtheorem{definition}[theorem]{Definition}
\newtheorem{remark}[theorem]{Remark}
\newtheorem{problem}{Problem}
\newtheorem{example}[theorem]{Example}
\newcommand{\thistime}{\expandafter\calctimeA\pdfcreationdate\@nil} 
\def\calctimeA#1:#2#3#4#5#6#7#8#9{\calctimeB}
\def\calctimeB#1#2#3#4#5\@nil{#1#2:#3#4}
\newcommand{\zen}{\mathsf{Z}} % centre
\renewcommand{\S}{\mathsf{S}}
\newcommand{\C}{\mathsf{C}}
\newcommand{\D}{\mathsf{D}}
\newcommand{\GL}{\mathsf{GL}}
\newcommand{\SL}{\mathsf{SL}}
\newcommand{\K}{\mathsf{K}}
\newcommand{\W}{\mathsf{W}}
\newcommand{\numberset}[1]{\mathbbm{#1}}
\newcommand{\N}{\numberset{N}}
\newcommand{\Z}{\numberset{Z}}
\newcommand{\aut}{\operatorname{\mathsf{Aut}}}
\renewcommand{\gcd}{\operatorname{\mathsf{gcd}}}
\newcommand{\conj}{\operatorname{\mathsf{Conj}}}
\newcommand{\arcs}{\operatorname{\mathsf{Arcs}}}
\newcommand{\+}{\mkern2mu} % spacing smaller than \, and \;
\let\perogni\forall % quick hack to obtain forall and exists with non-terrible spacing
\let\esiste\exists
\renewcommand{\forall}{\perogni\+}
\renewcommand{\exists}{\esiste\+}
\newcommand{\s}{\sigma}
\newcommand{\mo}{^{-1}} % to the minus one power
\newcommand\Tstrut{\rule{0pt}{5ex}}         % = `top' strut
\newcommand\Bstrut{\rule[-0.9ex]{0pt}{0pt}}   % = `bottom' strut
\title{Coloring Torus Knots by Conjugation Quandles}
\author{Filippo Spaggiari\\ \small{\href{spaggiari@karlin.mff.cuni.cz}{spaggiari@karlin.mff.cuni.cz}}\\\small{Department of Algebra, Faculty of Mathematics \& Physics, Charles University Prague, Czech Republic}}
\date{\today}
\begin{document}

\maketitle

\begin{abstract}
    In the first part of this paper, we present general results concerning the colorability of torus knots using conjugation quandles over any abstract group. Subsequently, we offer a numerical characterization for the colorability of torus knots using conjugation quandles over some particular groups, such as the matrix groups $\GL(2,q)$ and $\SL(2,q)$, the dihedral group, and the symmetric group.
\end{abstract}

\tableofcontents

\section{Introduction} \label{sec:introduction}

Coloring invariants serve as valuable computational tools in various contexts (see \cite{Clark}, \cite{Fish}, and \cite{elhamdadi2015quandles}). In addition, Kuperberg's NP certificate of knottedness can be effectively interpreted using coloring techniques (see \cite{Kuperberg}). A significant class of examples related to coloring is provided by conjugation quandles. Notably, a nontrivial coloring achieved with a quandle $Q$ implies the existence of a nontrivial coloring with $Q/\lambda$, where $\lambda$ represents the Cayley kernel. This quotient quandle can be embedded into $\conj(\aut(Q))$.

Matrix groups hold particular interest due to Kuperberg's certificate involving coloring by $\conj(\GL(2,q))$. His proof also suggests that the problem of coloring by $\conj(\GL(2,q))$ is challenging in general, prompting us to begin with a simpler class: the torus knots. By investigating coloring invariants in this specific context, we can gain valuable insights and potentially extend our understanding to more complex classes of knots.

Coloring of torus knots has garnered considerable attention and has been explored from various perspectives, as evidenced by the works \cite{Zhou}, \cite{Basi}, \cite{Iwakiri}, and \cite{Asami}. Notably, the concept of coloring by Alexander Quandles has been thoroughly examined in the work of Asami and Kuga (\cite{Asami}). Building upon these contributions, this research paper delves into the analysis of coloring torus knots using conjugation quandles.

The organization of this work is structured as follows. In Section \ref{sec:fundamentals}, we lay the groundwork by introducing fundamental notions related to torus knots and quandles, thereby providing a necessary foundation for subsequent discussions. In Section \ref{sec:abstract_groups}, we present general results pertaining to the conjugation quandle colorings of torus knots over abstract groups. This includes our main theorem (see Theorem \ref{thm:char_kmp}), which characterizes the colorability of torus knots, as well as exploring various properties of colorings.
Subsequently, in Section \ref{sec:particular_groups}, we apply the results obtained in Section \ref{sec:abstract_groups} to establish characterization theorems for specific groups, such as the matrix groups $\GL(2,q)$ and $\SL(2,q)$, the dihedral group, and the symmetric group. The selection of these particular groups is twofold in purpose: firstly, they provide a manageable context for formulating elementary statements and characterization theorems, although the proofs might be notably technical. Secondly, these groups have been under investigation in other contexts and papers, as evidenced by relevant references in the bibliography. 
Section \ref{sec:conclusions} concludes this work, by discussing and presenting some questions that remained open about using conjugation quandles to understand the colorability of torus knots.

\section{Fundamentals} \label{sec:fundamentals}

We begin by introducing and reviewing the mathematical tool necessary for developing the upcoming theory. Firstly, we provide the definition of a quandle, which is the algebraic structure playing a fundamental role in this paper. Subsequently, we proceed to the geometric counterpart, where we introduce and construct torus knots, along with their representations and colorings. Finally, we conclude by presenting some well-known knot-theoretical results that serve as the basis for making additional assumptions on the parameters. For a comprehensive understanding of Knot and Quandle Theory, we recommend consulting the books by Murasugi \cite{murasugi1996knot} and Elhamdadi \cite{elhamdadi2015quandles}.

\begin{definition}
    A \textbf{(right) quandle} is a binar $(Q,\rhd)$ satisfying the following axioms:
    \begin{enumerate}
        \item $\forall x,y,z\in Q:\ (x\rhd y)\rhd z = (x\rhd z)\rhd (y\rhd z)$.
        \item $\forall x,y\in Q\ \exists! z\in Q:\ z\rhd x=y$.
        \item $\forall x\in Q:\ x\rhd x = x$.
    \end{enumerate}
\end{definition}

Out of a given group, we can construct an important class of quandles.

\begin{definition}
    Let $G$ be a group. The \textbf{(right) conjugation quandle over $G$} is the quandle obtained by taking $Q=G$ as the underlying set and $x\rhd y=yxy\mo$ as the binary operation. We denote it by $\conj(G)$.
\end{definition}

The quandle operation exhibits nice features in conjugation quandles. In the following lemma, we introduce several properties that will be consistently utilized throughout the entire paper, without mentioning them explicitly.

\begin{lemma}
    Let $G$ be a group. In $\conj(G)$, for every $x,y,x_i,y_i\in G$ and $k\in\N$ we have
    \begin{enumerate}
        \item $(x_1x_2)\rhd y = (x_1\rhd y)(x_2\rhd y)$.
        \item $x^k\rhd y = (x\rhd y)^k$.
        \item $x\rhd(y_1y_2) = (x\rhd y_2)\rhd y_1$.
        \item $x\rhd y^k = (\ldots((x\rhd y)\rhd y)\rhd\ldots)\rhd y$ ($k$ times).
    \end{enumerate}
\end{lemma}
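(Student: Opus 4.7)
The plan is to verify each of the four identities by unfolding the definition $x \rhd y = yxy\mo$ and performing direct computations in the group $G$, with induction where a power of an element appears.

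For (i), I would compute
\[
(x_1x_2)\rhd y \;=\; y(x_1x_2)y\mo \;=\; (yx_1y\mo)(yx_2y\mo) \;=\; (x_1\rhd y)(x_2\rhd y),
\]
which is immediate after inserting $y\mo y=1$ in the middle. For (iii), I would expand similarly:
\[
x\rhd(y_1y_2) \;=\; (y_1y_2)\,x\,(y_1y_2)\mo \;=\; y_1\bigl(y_2xy_2\mo\bigr)y_1\mo \;=\; (x\rhd y_2)\rhd y_1.
\]

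For (ii), one option is a one-line telescoping $yx^ky\mo=(yxy\mo)^k$; alternatively, it follows by induction on $k$, using (i) at the inductive step: $x^{k+1}\rhd y=(x^k\cdot x)\rhd y=(x^k\rhd y)(x\rhd y)=(x\rhd y)^{k+1}$, with the trivial base case $k=0$ giving $1\rhd y=1$. For (iv), I would again argue by induction on $k$, where the base case $k=1$ is tautological and the inductive step uses (iii):
\[
x\rhd y^{k+1}=x\rhd(y^k\cdot y)=(x\rhd y)\rhd y^k,
\]
so that by the inductive hypothesis applied to $x\rhd y$ in place of $x$, we obtain the iterated expression with $k+1$ occurrences of $\rhd y$.

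Since every identity reduces to a short manipulation in $G$, there is essentially no obstacle; the only mild subtlety is choosing the cleanest ordering, namely proving (i) and (iii) first so that (ii) and (iv) follow as their ``iterated'' versions by a one-step induction. No nontrivial group-theoretic facts beyond associativity and the definition of inverse are needed.
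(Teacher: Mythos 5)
Your proposal is correct and matches the paper's approach: the paper simply states that all four identities are straightforward computations from the definition $x\rhd y=yxy\mo$, which is exactly what you carry out (with the sensible refinement of deriving (ii) and (iv) from (i) and (iii) by induction). Nothing to add.
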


\begin{proof}
    All of them are straightforward computations.
\end{proof}

We now proceed to introduce some fundamental notions of Knot Theory. All the knots in this paper are intended to be oriented. 

\begin{definition}
    Let $K$ be a regular diagram of an oriented knot, or simply, a knot. We denote by $\arcs(K)$ the set of connected strands of (the diagram) $K$. By a \textbf{(positive) crossing} in $K$ we mean a triple $(x,y,z)\in\arcs(K)^3$ such that \emph{$x$ passes under $y$ producing $z$} (see Figure \ref{fig:crossing}). In this case, we denote $x\rhd y=z$ and we call it \textbf{crossing relation} of $K$.
\end{definition}

\begin{figure}[ht]
    \centering
    \includegraphics[width=.25\linewidth]{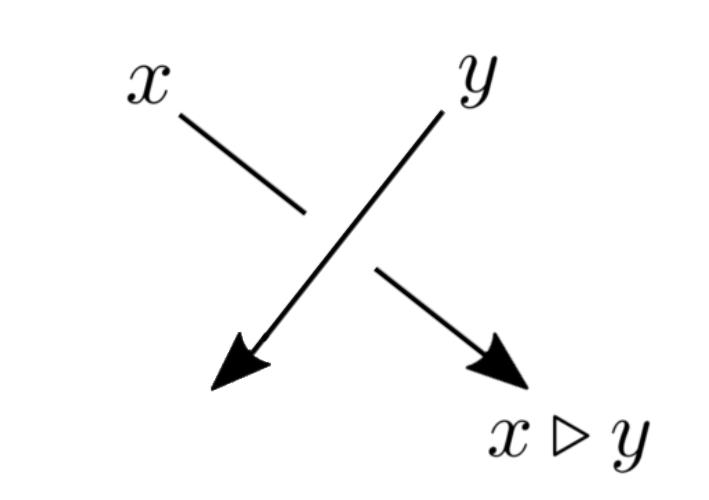}
    \caption{Crossing relation.}
    \label{fig:crossing}
\end{figure}

\begin{definition}
    Let $K$ be a knot and $(Q,\rhd)$ a quandle. A $Q$\textbf{-coloring} of $K$ is a mapping $c\colon\arcs(K)\to Q$ such that for every crossing $(x,y,z)$ of $K$ the equation $c(x)\rhd c(y)=c(z)$ holds in $Q$. If $c$ is the constant function, we say that it is the \textbf{trivial coloring}, every other coloring is called \textbf{non-trivial}. A knot $K$ is said to be \textbf{$Q$-colorable} if there exists a non-trivial coloring of $K$.
\end{definition}

\begin{remark}
    It is evident that every knot can be trivially colored; that is, a trivial coloring always exists. Therefore, our focus lies solely on the existence of non-trivial colorings. Consequently, we have chosen to define the $Q$-coloring in the manner described earlier, excluding trivial colorings.
\end{remark}

\begin{definition}
    Let $m,n$ be two positive integers and consider the braid with $n$ strands and $m$ twists as in Figure \ref{fig:kmn_std_diagram}. The $n$ leftmost strands are called \textbf{initial arcs}, the $n$ rightmost strands are called \textbf{terminal arcs}, and the $m$ diagonal strands are called \textbf{bridges}.
    The closure of such a braid, identifying each initial arc with the corresponding terminal arc, is called \textbf{$(m,n)$-torus knot (link)}, and denoted by $\K(m,n)$ (see Figure \ref{fig:k54_diagram}).
\end{definition}

\begin{figure}[ht] 
    \centering
    \includegraphics[width=.4\linewidth]{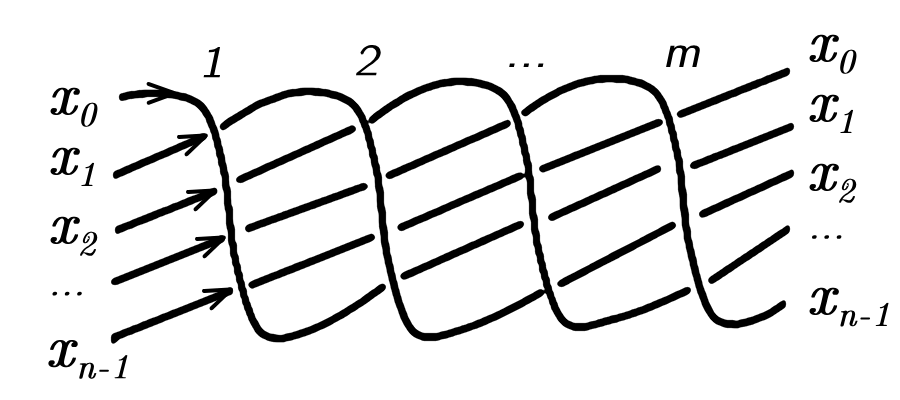}
    \caption{Braid diagram for $\K(m,n)$.}
    \label{fig:kmn_std_diagram}
\end{figure}

\begin{remark}
    Every torus knot (torus link) has the remarkable property of being embeddable on the surface of the trivial torus, without any points of self-intersection. Conversely, any knot lying on the surface of the trivial torus can be shown to be equivalent to $\K(m,n)$, for some integers $m$ and $n$. This explains the terminology used in Knot Theory. The depiction of this knot (link) on the trivial torus is shown in Figure \ref{fig:k54_diagram}. However, the diagram in Figure \ref{fig:k54_diagram} contains excessive information. Therefore, like many other authors, we prefer to use a more concise and schematic braid representation, as shown in Figure \ref{fig:kmn_std_diagram}. This simplified representation is known as the \emph{braid diagram} (or \emph{standard diagram}), and it disregards the specific identification presented in the knot definition. Certainly, when one is working with a torus knot, referring either to the initial arc or to the corresponding terminal arc naturally imparts the same information. When coloring is involved, we find the braid diagram particularly useful: it allows us to label the initial and terminal arcs with their respective colors easily. This labeling simplifies the coloring process and enhances our understanding of the knot's properties.
\end{remark}

\begin{figure}[ht]
    \centering
    \includegraphics[width=.4\linewidth]{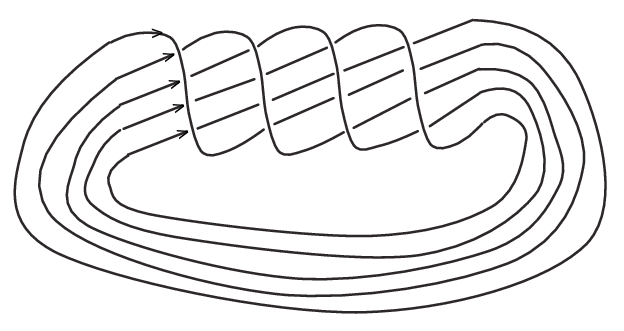}
    \caption{Knot diagram for $\K(5,4)$.}
    \label{fig:k54_diagram}
\end{figure}

There are two significant, well-known results that govern the overall behavior of a torus knot. Specifically, we have precise knowledge regarding how to set the parameters to achieve a pair of equivalent links or to cause the torus link to collapse into a knot (proofs and details can be found in Murasugi's book \cite{murasugi1996knot}).

\begin{theorem}[Classification of torus links] \label{thm:classification_torus_knots}
    Let $m,n,r,s\ge 2$. Then $\K(m,n)$ is equivalent to $\K(r,s)$ if and only if $\{m,n\}=\{r,s\}$.
\end{theorem}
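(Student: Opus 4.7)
The plan is to prove the two implications separately and to reduce the link case to the (coprime) knot case. For the easy direction it suffices to exhibit $\K(m,n) \sim \K(n,m)$: embed $S^3$ as the unit sphere of $\mathbb{C}^2$ and let $T = \{|z_1| = |z_2|\}$ be the Clifford torus, which bounds two solid tori $V_1,V_2$ in $S^3$. The involution $(z_1,z_2) \mapsto (z_2,z_1)$ swaps $V_1$ and $V_2$, hence exchanges the meridian and longitude of $T$, and carries a curve of slope $m/n$ on $T$ to one of slope $n/m$; since $\K(m,n)$ is such a curve, this produces the required equivalence.

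For the hard direction, I would first reduce to the coprime case. Reading off the braid diagram, the $m$-fold twist induces the permutation $i \mapsto i + m \pmod n$ on the $n$ strands, whose orbits are precisely the link components, so $\K(m,n)$ has $d := \gcd(m,n)$ components, each isotopic to $\K(m/d,\,n/d)$. Consequently $\K(m,n) \sim \K(r,s)$ forces $d = \gcd(r,s)$ and $\K(m/d, n/d) \sim \K(r/d, s/d)$ with both pairs coprime, reducing the problem to the knot case. Assuming coprimality, applying Seifert--van Kampen to $S^3 = V_1 \cup V_2$ with $\K(m,n)$ pushed slightly off $T$ into one side yields the classical torus-knot presentation
\[
    \pi_1\bigl(S^3 \setminus \K(m,n)\bigr) \cong G_{m,n} := \langle a, b \mid a^m = b^n \rangle.
\]
The element $c := a^m = b^n$ is central, generates the full center $\zen(G_{m,n})$, and produces the knot-invariant quotient
\[
    G_{m,n} / \zen(G_{m,n}) \cong \Zmod{m} * \Zmod{n}.
\]

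It therefore suffices to recover the unordered pair $\{m,n\}$ from the abstract group $\Zmod{m} * \Zmod{n}$, and this is the main obstacle of the proof. Purely abelian invariants do not suffice: the abelianization $\Zmod{m} \oplus \Zmod{n}$ fails to detect $\{m,n\}$ in general, as witnessed by $\{3e, 5e\}$ and $\{e, 15e\}$, which share gcd, product, and abelianization yet are distinct multisets. The appropriate tool is Kurosh's subgroup theorem, which implies that every finite subgroup of a free product is conjugate into one of the factors. Hence in $\Zmod{m} * \Zmod{n}$ the two free factors are, up to conjugacy, the maximal finite subgroups, and the multiset of their orders — which is exactly $\{m,n\}$ — is an invariant of the abstract group. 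Combining this with the reduction to the coprime case completes the argument.
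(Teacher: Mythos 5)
The paper itself does not prove this statement (it cites Murasugi), so there is no internal proof to compare against; judging your argument on its own merits, the swap involution for $\K(m,n)\sim\K(n,m)$ and the coprime knot case via $\pi_1(S^3\setminus\K(m,n))\cong\langle a,b\mid a^m=b^n\rangle$, its center, and Kurosh's theorem are all correct and standard. The genuine gap is in your reduction of the link case to the knot case. You reduce $\K(m,n)\sim\K(r,s)$ to the data ``number of components $d$'' plus ``knot type of each component $\K(m/d,n/d)$,'' but this pair of invariants does not determine the link, and your knot-case argument cannot recover the reduced parameters when one of them equals $1$. Concretely, take $\K(2,4)$ and $\K(2,6)$: both have $d=2$ components, and every component is $\K(1,2)$ resp.\ $\K(1,3)$, i.e.\ an unknot, so your reduction hands the knot case the task of distinguishing $\K(1,2)$ from $\K(1,3)$ --- which is impossible, since both groups are $\Z$, the center quotient is trivial, and Kurosh yields nothing. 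Yet $\K(2,4)\not\sim\K(2,6)$ (the theorem asserts this, since $\{2,4\}\neq\{2,6\}$), so your proof as written fails to separate inequivalent links.

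The missing ingredient is an invariant that remembers how the components are linked, not just what they are. The cheapest fix is the pairwise linking number: any two components of $\K(m,n)$ have linking number $mn/d^2$ (positive with the standard orientation, so no sign ambiguity). Combined with $d$ and, when $m/d,n/d\ge 2$, the multiset $\{m/d,n/d\}$ from your group-theoretic argument, this recovers $\{m,n\}$ in all cases: in the degenerate case $m/d=1$ you have $m=d$, and $n$ is then determined by $mn/d^2=n/d$. Alternatively one can avoid the componentwise reduction altogether and compute the fundamental group of the full link complement (or, as Murasugi does, use the Alexander polynomial / genus together with the bridge or braid index). Without one of these additions, the step ``reducing the problem to the knot case'' is not valid.
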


\begin{theorem}[Torus knots and links] \label{thm:knot_link_gcd}
    Let $m,n\ge 1$. Then $\K(m,n)$ is a knot if and only if $\gcd(m,n)=1$.
\end{theorem}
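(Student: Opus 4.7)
The plan is to reduce the statement to a standard orbit-counting fact for powers of an $n$-cycle. Look at the braid diagram in Figure~\ref{fig:kmn_std_diagram}: the closure identifies the $i$-th initial arc with the $i$-th terminal arc, so the number of link components of $\K(m,n)$ equals the number of orbits (cycles) of the permutation $\pi_{m,n}\in\sym(\{1,\dots,n\})$ induced by the underlying braid on the indices of the $n$ strands. Thus $\K(m,n)$ is a knot if and only if $\pi_{m,n}$ is an $n$-cycle, i.e., acts transitively on $\{1,\dots,n\}$.

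First I would compute $\pi_{m,n}$. A single full twist of the $n$ strands corresponds to the braid $\sigma_1\sigma_2\cdots\sigma_{n-1}$, whose underlying permutation is the $n$-cycle $\tau=(1\ 2\ \cdots\ n)$; this is visible directly from the diagram, since after one twist the $i$-th initial arc exits in position $i+1\pmod n$. Performing $m$ such twists gives $\pi_{m,n}=\tau^m$.

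Next, I would invoke the standard fact that for an $n$-cycle $\tau$, the number of cycles of $\tau^m$ equals $d\defeq\gcd(m,n)$, and each cycle has length $n/d$. Concretely, the orbit of $i$ under $\tau^m$ is $\{i+km\pmod n:k\in\Z\}=i+\langle m\rangle$ inside $\Zmod{n}$, and $\langle m\rangle$ is the unique subgroup of order $n/d$. Hence $\tau^m$ is transitive if and only if $d=1$.

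Combining these two steps gives the theorem: $\K(m,n)$ has exactly $\gcd(m,n)$ components, so it is a knot exactly when $\gcd(m,n)=1$. The only mildly delicate point is the initial identification of components with orbits of $\pi_{m,n}$, which is the main thing to state carefully; the rest is the routine cyclic-group computation above.
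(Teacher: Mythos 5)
Your argument is correct and is the standard one: components of a braid closure correspond to orbits of the underlying permutation, the $(m,n)$-torus braid induces $\tau^m$ for an $n$-cycle $\tau$, and $\tau^m$ has exactly $\gcd(m,n)$ orbits, so $\K(m,n)$ has $\gcd(m,n)$ components. Note that the paper does not prove this statement itself --- it records it as a classical fact and refers to Murasugi's book --- so there is no internal proof to compare against; your reduction to the orbit count, including the care you flag about identifying components with orbits of $\pi_{m,n}$, is exactly the expected argument.
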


\section{Conjugation quandle coloring of torus knots} \label{sec:abstract_groups}

\subsection{First characterization}

The first result is a characterization of coloring a torus knot using a conjugation quandle: coloring a knot with $\conj(G)$ is equivalent to finding a tuple of elements in $G$ that satisfy certain group term equations, or alternatively, certain quandle term equations.

\begin{definition}
    Let $m,n\in\N$ be such that $\gcd(m,n)=1$, let $G$ be a group, and let $x_0,\dots,x_{n-1}\in G$. We say that the tuple $(x_0,\dots,x_{n-1})$ \textbf{extends to a coloring} of $\K(m,n)$ if there exists a unique $\conj(G)$-coloring of $\K(m,n)$ of which $(x_0,\dots,x_{n-1})$ are the colors of the initial arcs, respectively. 
\end{definition}

\begin{remark}
    Given a coloring, we may always extract the tuple of the colors of the initial arcs. That tuple naturally extends to the given coloring. Moreover, observe that the constant tuple extends to the trivial coloring.
\end{remark}

\begin{theorem} \label{thm:characterization_1}
    Let $m,n\in\N$ be such that $\gcd(m,n)=1$, let $G$ be a group, and let $x_0,\dots,x_{n-1}\in G$ be not all equal. The following conditions are equivalent.
    \begin{enumerate}
        \item The tuple $(x_0,\dots,x_{n-1})$ extends to a (non-trivial) coloring for $\K(m,n)$.
        \item $|\{x_{0+i\pmod{n}}x_{1+i\pmod{n}}\ldots x_{m-1+i\pmod{n}}\colon i=0,\dots,n-1\}|=1.$ \label{thm:char2}
        \item For $u=\prod_{j=0}^{m-1}x_{n-m+j\pmod{n}}$, we have \label{thm:char3}
        $$x_i\rhd u = x_{i-m \pmod{n}}\quad\quad\forall i=0,\dots,n-1.$$ 
    \end{enumerate}
    Moreover, in this case, the element $u$ of \ref{thm:char3} is the common value in the set of \ref{thm:char2}.
\end{theorem}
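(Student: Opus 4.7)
The plan is to propagate colors through the standard braid diagram (Figure \ref{fig:kmn_std_diagram}) and convert the closure condition at the terminal arcs into the stated algebraic identities. I would focus first on the case $m \le n$; the case $m > n$ follows by iterating the analysis over ``full twists'', since every block of $n$ consecutive cycles conjugates all strand colors by the same element. Let $z_i^{(k)}$ denote the color at position $i$ (counting from $0$ at the top) after $k$ complete cycles of $\sigma_1 \sigma_2 \cdots \sigma_{n-1}$. Then $z_i^{(0)} = x_i$, condition (i) is equivalent to $z_i^{(m)} = x_i$ for all $i$, and, since in each cycle the top strand passes over every other strand down to position $n-1$, one obtains the recurrence $z_i^{(k+1)} = z_{i+1 \bmod n}^{(k)} \rhd z_0^{(k)}$.

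Next I would identify the bridge colors by setting $u_k := z_0^{(k)}$ (the color of the bridge in cycle $k+1$) and $v_k := u_{k-1} u_{k-2} \cdots u_0$. Using the recurrence together with the identity $v_{k+1} = u_k v_k = v_k x_k$, a quick induction would show $u_k = v_k x_k v_k^{-1}$ and $v_k = x_0 x_1 \cdots x_{k-1}$ for $0 \le k \le n$. In particular, $W := v_m = x_0 x_1 \cdots x_{m-1}$ will serve as the candidate for the common conjugating element of the whole braid.

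The heart of the argument, and the step I expect to be the main obstacle, is to prove that the strand originally at position $p$ terminates with color $W x_p W^{-1}$ for \emph{every} $p \in \{0, \ldots, n-1\}$. When $p \ge m$, the strand never reaches position $0$ during the $m$ cycles, so it passes under the bridges $u_0, u_1, \ldots, u_{m-1}$ in order, and the claim is direct. When $p < m$, however, the strand itself becomes the $(p+1)$-th bridge, so the conjugating sequence skips the factor $u_p$; one must verify that the internal form $u_p = v_p x_p v_p^{-1}$ exactly compensates for this omission, essentially by showing that $W^{-1}(u_{m-1} \cdots u_{p+1}) v_p$ simplifies to $x_p^{-1}$ and hence commutes with $x_p$. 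Once this is settled, the closure condition $z_i^{(m)} = x_i$ becomes $x_p \rhd W = x_{p-m \bmod n}$ for every $p$.

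Finally, to match (ii) and (iii), I would set $P_i := x_i x_{i+1} \cdots x_{i+m-1}$ (indices mod $n$) and observe the telescoping identity $P_{i+1} = x_i^{-1} P_i x_{i+m}$. Combined with the relation $W x_{i+m} = x_i W$ coming from the closure condition, this forces $P_{i+1} = P_i$ for every $i$, so all $n$ cyclic products collapse to a single value, which is (ii); since $P_0 = W$ and $P_{n-m} = u$ by definition, that common value equals $W = u$, yielding (iii) together with the last assertion of the theorem. The implications (ii), (iii) $\Rightarrow$ (i) follow by running the same algebra in reverse: the defining relations of (iii) force $W = u$ and restore the closure condition, hence the (unique) extension of the tuple to a coloring.
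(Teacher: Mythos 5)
Your argument is correct, and it arrives at the same identities as the paper, but it is organized genuinely differently. The paper's proof of (i)$\Rightarrow$(iii) works with the $m$ bridge colors $y_0,\dots,y_{m-1}$ and repeatedly applies self-distributivity to push each $y_j$ past the later bridges, converting the nested quandle term $((x_i\rhd y_0)\rhd\cdots)\rhd y_{m-1}$ into conjugation by $u=x_{n-m}\cdots x_{n-1}$; its passage between (iii) and (ii) is a cancellation argument that explicitly invokes $\gcd(m,n)=1$ to reach all $n$ cyclic products. You instead track the whole color vector cycle by cycle: the uniform recurrence $z_i^{(k+1)}=z_{i+1}^{(k)}\rhd z_0^{(k)}$ (which, via idempotence, already covers the bridge strand at $i=n-1$, so the ``main obstacle'' you anticipate for $p<m$ is dissolved by your own setup --- your check that $W^{-1}(u_{m-1}\cdots u_{p+1})v_p=x_p^{-1}$ is correct but not actually needed once the recurrence is stated uniformly) yields $z_i^{(k)}=v_k\,x_{i+k}\,v_k^{-1}$ with $v_k=x_0\cdots x_{k-1}$ by a one-line induction. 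This buys two things: the conjugator is exhibited as an explicit group element from the outset rather than extracted from nested quandle expressions, and the telescoping identity $P_{i+1}=x_i^{-1}P_ix_{i+m}$ links (ii) and (iii) by stepping through consecutive indices, so that step needs no coprimality at all (the hypothesis $\gcd(m,n)=1$ is of course still needed for $\K(m,n)$ to be a knot and for the closure condition to be the only constraint). The remaining loose ends are cosmetic: the case split $m\le n$ versus $m>n$ is unnecessary since your induction on $k$ is uniform, and the reverse implications, which you only sketch, do go through as indicated (from (iii) one has $x_i^{-1}ux_{i+m}=u$, which together with $P_{n-m}=u$ propagates $P_i=u$ to every $i$, recovering (ii), the identity $W=u$, and the closure condition).
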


\begin{proof} Prove the implications separately. All the indices of symbols $x$ are assumed to be computed modulo $n$ and all the indices of symbols $y$ are assumed to be computed modulo $m$.
    \begin{description}
        \item[$(i)\implies(iii)$] Denote by $y_0,\dots,y_{m-1}$ the colors of the bridges, as in the Figure \ref{fig:k54_xy}. 

        \begin{figure}[ht]
            \centering
            \includegraphics[width=.4\linewidth]{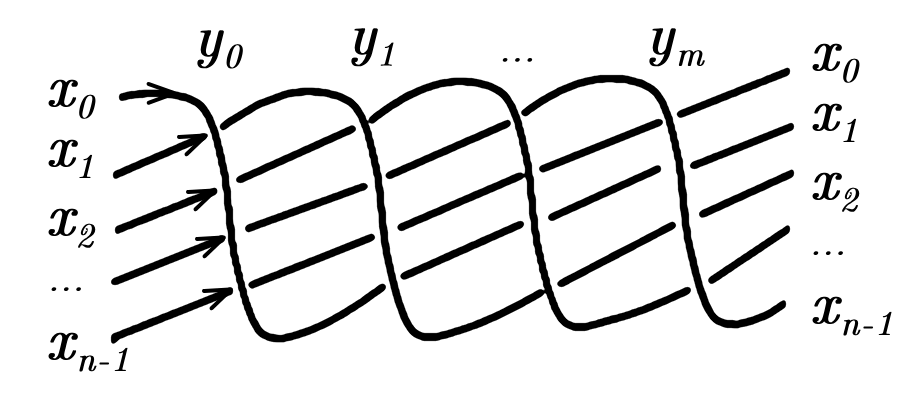}
            \caption{Colors in the proof of Theorem \ref{thm:characterization_1}}
            \label{fig:k54_xy}
        \end{figure}
        
        Observe that $x_0=y_0$. By the definition of coloring, we have the following relations
        \begin{equation} \label{eqn:yj_train_chop}
            (((y_j\rhd y_{j+1})\rhd y_{j+2})\rhd\ldots)\rhd y_{m-1}=x_{j+m-n}\quad\quad\forall j=0,\dots,m-1.
        \end{equation}
        Moreover, by the geometry of the torus knot, because of the $m$ twists, we also have
        \begin{equation} \label{eqn:xj_train}
            (((x_i\rhd y_0)\rhd y_1)\rhd\ldots)\rhd y_{m-1}=x_{i-m}\quad\quad\forall i=0,\dots,n-1.
        \end{equation}
        Now, if we expand equations (\ref{eqn:xj_train}), using the quandle axioms and equations (\ref{eqn:yj_train_chop}), we obtain
        \begin{align*}
            x_{i-m}&=(((x_i\rhd y_0)\rhd y_1)\rhd\ldots)\rhd y_{m-1} \\
            &=((((x_i\rhd y_1)\rhd y_2)\rhd\ldots)\rhd y_{m-1})\rhd ((((y_0\rhd y_1)\rhd y_2)\rhd\ldots)\rhd y_{m-1}) \\
            &=((((x_i\rhd y_1)\rhd y_2)\rhd\ldots)\rhd y_{m-1})\rhd x_{n-m} \\
            &=(((((x_i\rhd y_2)\rhd y_3)\rhd\ldots)\rhd y_{m-1})\rhd ((((y_1\rhd y_2)\rhd y_3)\rhd\ldots)\rhd y_{m-1})) \rhd x_{n-m} \\
            &=(((((x_i\rhd y_2)\rhd y_3)\rhd\ldots)\rhd y_{m-1})\rhd x_{n-m+1}) \rhd x_{n-m} \\
            &=\dots \\
            &=(((((x_i\rhd x_{n-1})\rhd x_{n-2})\rhd\ldots)\rhd x_{n-m+2})\rhd x_{n-m+1}) \rhd x_{n-m} \\
            &= (x_{n-m}x_{n-m+1}\ldots x_{n-2}x_{n-1})x_i(x_{n-1}\mo x_{n-2}\mo\ldots x_{n-m+1}\mo x_{n-m}\mo) \\
            &=x_i\rhd u.
        \end{align*}

        \item[$(iii)\implies(i)$] Associate the elements $x_0,\dots,x_{n-1}$ to the initial arcs, and compute the colors of the bridges $y_0,\dots,y_{m-1}$ recursively as follows:
        \begin{align*}
            y_0&=x_0 \\
            y_j&=(((x_j\rhd y_0)\rhd y_1)\rhd\ldots)\rhd y_{j-1}\quad\quad\forall j=0,\dots,m-1.
        \end{align*}
        This allows computing the color of all other elements in $\arcs(\K(m,n))$ and the conditions $x_i\rhd u = x_{i-m}$ guarantee the good definition of colors $x_i$'s for the diagram of $\K(m,n)$.
        
        \item[$(ii)\implies(iii)$] Note that \ref{thm:char2} can be seen as a chain of equations, where the terms are the product of the colors whose indices are consecutive and shifted by the same constant, possibly reduced modulo $n$. Fix $i\in\{0,\dots,n-1\}$. Then, because of \ref{thm:char2}, we have 
        \begin{align*}
            x_i\rhd u = ux_iu\mo &= (x_{n-m}x_{n-m+1}\ldots x_{n-2}x_{n-1})x_i(x_{n-m}x_{n-m+1}\ldots x_{n-2}x_{n-1})\mo \\
            &= (x_{n-m+i}x_{n-m+1+i}\ldots x_{n-2+i}x_{n-1+i})x_i(x_{n-m+i+1}x_{n-m+1+i+1}\ldots x_{n-2+i+1}x_{n-1+i+1})\mo \\
            &= (x_{i-m}x_{i-m+1}\ldots x_{i-2}x_{i-1})x_i(x_{i}\mo x_{i-1}\mo \ldots x_{i-m+2}\mo x_{i-m+1}\mo) = x_{i-m}. \\
        \end{align*}

        \item[$(iii)\implies(ii)$] Expand the equations as follows
        \begin{align} \label{eqn:char3_1}
            x_i\rhd u = x_{i-m} \iff (x_{n-m}x_{n-m+1}\ldots x_{n-2}x_{n-1})x_i=x_{i-m}(x_{n-m}x_{n-m+1}\ldots x_{n-2}x_{n-1}).
        \end{align}
        Set $i=0=n$ in (\ref{eqn:char3_1}), and cancel out the term $x_{n-m}$ on the left obtaining
        \begin{align} \label{eqn:char3_2}
            x_{n-m+1}x_{n-m+2}\ldots x_{n-2}x_{n-1}x_i=x_{n-m}x_{n-m+1}\ldots x_{n-2}x_{n-1},
        \end{align}
        which is one of the equations in \ref{thm:char2}. Substitute (\ref{eqn:char3_2}) in (\ref{eqn:char3_1}), set $i=n-m+1$, and cancel out the first term again to obtain another of the equations. Proceeding this way, since $\gcd(m,n)=1$ we obtain all the equations in $(ii)$. \qedhere
    \end{description}
\end{proof}

\begin{definition}
    Let $m,n\in\N$ be such that $\gcd(m,n)=1$, let $G$ be a group, and let $x_0,\dots,x_{n-1}\in G$. Let $(x_0,\dots,x_{n-1})$ extend to a coloring of $\K(m,n)$. We refer to the element $u=x_{n-m}x_{n-m+1}\ldots x_{n-2}x_{n-1}\in G$ (as in Theorem \ref{thm:characterization_1}) as the \textbf{harlequin} of $(x_0,\dots,x_{n-1})$.
\end{definition}

\subsection{Properties of conjugation quandle coloring}

It is natural to inquire whether a given knot coloring can be used to create a coloring for another knot. We observe that the answer to this question is frequently affirmative, and it involves certain divisibility conditions on the parameters of the torus knot. Throughout this subsection, $G$ denotes any fixed group.

\begin{proposition} \label{thm:mn_to_tmn}
     Let $m,n,t\in\N$ be such that $\gcd(m,n)=1$ and $\gcd(tm,n)=1$. If $\K(m,n)$ is $\conj(G)$-colorable, then also $\K(tm,n)$ is $\conj(G)$-colorable.
\end{proposition}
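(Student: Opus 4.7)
The plan is to apply the characterization in Theorem \ref{thm:characterization_1} twice: once to extract structural data from the hypothesis that $\K(m,n)$ is $\conj(G)$-colorable, and once to certify that the same tuple also colors $\K(tm,n)$.

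First, I would invoke Theorem \ref{thm:characterization_1} on $\K(m,n)$ to obtain a non-constant tuple $(x_0,\dots,x_{n-1})\in G^n$ together with an element $u\in G$ (the harlequin) such that
$$x_i x_{i+1}\cdots x_{i+m-1}=u\quad\text{for all } i=0,\dots,n-1,$$
with indices modulo $n$. The candidate coloring data for $\K(tm,n)$ will be exactly the same tuple $(x_0,\dots,x_{n-1})$; note that the coprimality assumption $\gcd(tm,n)=1$ ensures $\K(tm,n)$ is a bona fide knot by Theorem \ref{thm:knot_link_gcd}, so Theorem \ref{thm:characterization_1} applies to it.

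To verify condition (ii) of Theorem \ref{thm:characterization_1} for $\K(tm,n)$, I would simply chop the cyclic product of $tm$ consecutive colors into $t$ contiguous blocks of length $m$. Concretely, for each $i$,
$$x_i x_{i+1}\cdots x_{i+tm-1}=\prod_{k=0}^{t-1}\bigl(x_{i+km}\,x_{i+km+1}\cdots x_{i+km+m-1}\bigr)=u^t,$$
since every inner block of $m$ consecutive elements (no matter its cyclic starting index modulo $n$) equals $u$. Thus all $n$ cyclic products of length $tm$ coincide with the common value $u^t$, which is precisely condition (ii).

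Since the tuple is non-constant, Theorem \ref{thm:characterization_1} produces a non-trivial $\conj(G)$-coloring of $\K(tm,n)$, and this coloring has harlequin $u^t$. The argument is essentially bookkeeping once the characterization is in hand, so there is no serious obstacle; the only point that merits a line of justification is that the block decomposition is valid cyclically, which is immediate because condition (ii) for $\K(m,n)$ asserts that \emph{every} window of $m$ consecutive colors (read modulo $n$) has the same product.
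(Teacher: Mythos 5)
Your proof is correct, but it takes a different route from the paper's. The paper argues geometrically in one line: the braid diagram of $\K(tm,n)$ is obtained by gluing $t$ copies of the braid diagram of $\K(m,n)$ end to end, so the given coloring simply repeats across the copies. You instead verify condition (ii) of Theorem \ref{thm:characterization_1} algebraically: every cyclic window of $tm$ consecutive colors factors into $t$ windows of length $m$, each equal to the common value $u$, so all length-$tm$ windows equal $u^t$. Both arguments are sound, and they are really two faces of the same phenomenon (passing under $tm$ bridges is passing under $m$ bridges $t$ times), but your version buys something concrete that the paper's does not state: it identifies the harlequin of the induced coloring of $\K(tm,n)$ as $u^t$, which is exactly the kind of bookkeeping the paper later needs in Proposition \ref{thm:mtn_to_mn} (where the harlequin of the derived coloring is $v^t$). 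One small point worth making explicit in your write-up: you pass from ``$\K(m,n)$ is $\conj(G)$-colorable'' to ``the initial tuple is non-constant,'' which holds because the extension of a tuple to a coloring is unique and the constant tuple extends only to the trivial coloring (as noted in the paper's remark following the definition); since you reuse the same non-constant tuple for $\K(tm,n)$, non-triviality of the new coloring follows.
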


\begin{proof}
    The diagram of $\K(tm,n)$ can be obtained by gluing $t$ copies of the braid diagram of $\K(m,n)$, so we may use the given non-trivial coloring of $\K(m,n)$ to obtain a non-trivial coloring of $\K(tm,n)$.
\end{proof}

\begin{remark}
    In the previous proposition, the condition $\gcd(tm,n)=1$ is required only for $\K(tm,n)$ not to be a link (see Theorem \ref{thm:knot_link_gcd}), and it is not directly required in the proof. While many results presented here could be extended to links, we adhere to the convention of exclusively focusing on knots throughout this paper. This approach also involves assuming the greatest common condition divisor on the parameters.
\end{remark}

\begin{proposition} \label{thm:mn_to_mtn}
     Let $m,n,t\in\N$ be such that $\gcd(m,n)=1$ and $\gcd(m,tn)=1$. If $\K(m,n)$ is $\conj(G)$-colorable, then also $\K(m,tn)$ is $\conj(G)$-colorable.
\end{proposition}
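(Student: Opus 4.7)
The plan is to argue via the algebraic characterization in Theorem \ref{thm:characterization_1}(ii): rather than trying to glue diagrams (which, unlike in Proposition \ref{thm:mn_to_tmn}, is not so natural here because passing from $\K(m,n)$ to $\K(m,tn)$ multiplies the number of strands while keeping the number of twists fixed), I would simply transport a non-trivial tuple from $\K(m,n)$ to $\K(m,tn)$ by periodic repetition, and verify the product condition.

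More precisely, start from a non-trivial $\conj(G)$-coloring of $\K(m,n)$. By Theorem \ref{thm:characterization_1}, this corresponds to a non-constant tuple $(x_0,\dots,x_{n-1})\in G^n$ such that the $n$ cyclic products $x_{0+i}x_{1+i}\cdots x_{m-1+i}$ (indices mod $n$) all coincide with a single element $u\in G$ (the harlequin). Now define
$$z_i \defeq x_{i \bmod n}, \qquad i=0,1,\dots,tn-1,$$
which is again non-constant in $G^{tn}$ because $(x_0,\dots,x_{n-1})$ is. The key step is then to check that $(z_0,\dots,z_{tn-1})$ satisfies condition (ii) of Theorem \ref{thm:characterization_1} for the parameters $(m,tn)$.

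This verification is a direct substitution. For every $i\in\{0,\dots,tn-1\}$, reducing the $z$-indices modulo $tn$ and then further modulo $n$ (which is the same as reducing modulo $n$ directly), one obtains
$$z_i z_{i+1}\cdots z_{i+m-1} \;=\; x_{i\bmod n}\, x_{(i+1)\bmod n}\cdots x_{(i+m-1)\bmod n},$$
and the right-hand side is one of the $n$ cyclic products of the original $\K(m,n)$-tuple, hence equals $u$. Thus all $tn$ products coincide, and condition (ii) is met. Since $\gcd(m,tn)=1$ by hypothesis, Theorem \ref{thm:characterization_1} applies in the reverse direction and $(z_0,\dots,z_{tn-1})$ extends to a non-trivial $\conj(G)$-coloring of $\K(m,tn)$.

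There is no real obstacle once Theorem \ref{thm:characterization_1} is in hand: the whole content of the proof is noticing that the cyclic product condition is preserved under periodic extension of the index set, and that non-constancy is preserved as well. The only subtle point to mention is why one is allowed to lift indices from mod $tn$ to mod $n$ in the product formula, which is exactly the compatibility between the two reductions and is used implicitly throughout.
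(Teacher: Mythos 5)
Your proof is correct, but it takes a genuinely different route from the paper. The paper disposes of this proposition in one line by reduction: it invokes the classification of torus knots (Theorem \ref{thm:classification_torus_knots}) to pass from $\K(m,n)$ to the equivalent knot $\K(n,m)$, applies the diagram-gluing argument of Proposition \ref{thm:mn_to_tmn} to get $\K(tn,m)$, and swaps back. You instead work entirely on the algebraic side: you take the non-constant tuple $(x_0,\dots,x_{n-1})$ afforded by Theorem \ref{thm:characterization_1}, extend it $n$-periodically to $(z_0,\dots,z_{tn-1})$ with $z_i=x_{i\bmod n}$, and check that the cyclic product condition (ii) survives because reduction mod $tn$ followed by reduction mod $n$ is reduction mod $n$. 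Both arguments are sound; your index bookkeeping and the observation that non-constancy is preserved are exactly the points that need checking, and they do check out (note also that a non-trivial coloring does force the initial tuple to be non-constant, since a constant initial tuple propagates to constant bridge colors). What the paper's route buys is brevity, at the cost of leaning on the topological fact $\K(m,n)\simeq\K(n,m)$; what your route buys is a purely algebraic, self-contained argument that moreover produces an explicit coloring --- indeed the periodic tuple you construct is precisely the one the paper writes down later when it colors $\K(2,15)$ from a coloring of $\K(2,3)$ in the example following Proposition \ref{thm:mtn_to_mn}, so your proof makes explicit a construction the paper only exhibits by example.
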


\begin{proof}
    From Theorem \ref{thm:classification_torus_knots} and Proposition \ref{thm:mn_to_tmn}, we can infer that 
    \[
        \K(m,n)\mbox{ is colorable }\implies \K(n,m)\mbox{ is colorable }\implies \K(tn,m)\mbox{ is colorable }\implies \K(m,tn)\mbox{ is colorable}.\qedhere
    \] 
\end{proof}

\begin{proposition} \label{thm:mtn_to_mn}
     Let $m,n,t\in\N$ be such that $\gcd(m,n)=1$, and $\gcd(m,tn)=1$. Let $(y_0,\dots,y_{tn-1})$ extend to a coloring of $\K(m,tn)$, and define $$x_i=\prod_{j=0}^{t-1}y_{it+j},\quad\quad\mbox{for } i=0,\dots,n-1.$$
     Then $(x_0,\dots,x_{n-1})$ extends to a (possibly trivial) coloring of $\K(m,n)$.
\end{proposition}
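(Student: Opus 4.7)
The plan is to verify condition \ref{thm:char2} of Theorem \ref{thm:characterization_1} for the tuple $(x_0,\dots,x_{n-1})$, using the fact that the hypothesis gives us the same condition for $(y_0,\dots,y_{tn-1})$.

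First, since $(y_0,\dots,y_{tn-1})$ extends to a coloring of $\K(m,tn)$, Theorem \ref{thm:characterization_1} provides an element $u\in G$ (the harlequin of the $y$-tuple) such that
\[
    \prod_{k=0}^{m-1} y_{k+j\pmod{tn}} = u \qquad \forall j=0,\dots,tn-1.
\]
Next, I would fix $i\in\{0,\dots,n-1\}$ and rewrite $\prod_{k=0}^{m-1} x_{k+i\pmod{n}}$ by substituting the definition $x_\ell = \prod_{j=0}^{t-1} y_{\ell t + j}$. The key observation is a compatibility of the two modular reductions: when the $x$-index wraps around mod $n$, its expansion wraps the $y$-indices around mod $tn$, because shifting $x$-indices by $n$ shifts the corresponding $y$-indices by exactly $tn$. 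Therefore the expansion is simply the product of $mt$ consecutive $y$'s starting at position $it$ mod $tn$:
\[
    \prod_{k=0}^{m-1} x_{k+i\pmod n} = \prod_{\ell=0}^{mt-1} y_{it+\ell \pmod{tn}}.
\]

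Now I would group the $mt$ factors on the right into $t$ consecutive blocks of length $m$: the block indexed by $s\in\{0,\dots,t-1\}$ is $\prod_{k=0}^{m-1} y_{it+sm+k\pmod{tn}}$, which equals $u$ by the first step. Hence the total product equals $u^t$, a quantity independent of $i$.

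Finally I would invoke the implication \ref{thm:char2}$\Rightarrow$(i) of Theorem \ref{thm:characterization_1}, applied to $\K(m,n)$ (which is a genuine knot since $\gcd(m,tn)=1$ forces $\gcd(m,n)=1$), to conclude that $(x_0,\dots,x_{n-1})$ extends to a coloring of $\K(m,n)$, with harlequin $u^t$; the statement explicitly allows this coloring to be trivial, so no further work is required. The only delicate point is the index bookkeeping showing that cyclic reduction mod $n$ on the $x$-side is exactly matched by cyclic reduction mod $tn$ on the $y$-side; once this is verified, the regrouping into $t$ blocks of $m$ does all the heavy lifting.
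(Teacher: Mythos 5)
Your proof is correct, but it routes through a different clause of Theorem \ref{thm:characterization_1} than the paper does. The paper verifies condition (iii): writing $v$ for the harlequin of the $y$-tuple and $u=v^t$, it computes
\[
x_i\rhd u=\left(\prod_{j=0}^{t-1}y_{it+j}\right)\rhd v^t=\prod_{j=0}^{t-1}\left(y_{it+j}\rhd v^t\right)=\prod_{j=0}^{t-1}y_{(i-m)t+j\pmod{tn}}=x_{i-m\pmod{n}},
\]
using only the identity $y_j\rhd v=y_{j-m\pmod{tn}}$ and the fact that conjugation by a fixed element distributes over products. You instead verify condition \ref{thm:char2}: you expand a product of $m$ cyclically consecutive $x$'s into $mt$ consecutive $y$'s and regroup them into $t$ ordered blocks of $m$ consecutive factors, each equal to $v$, so every such product equals $v^t$ independently of the starting index. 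Both arguments rest on the same index bookkeeping (reduction mod $n$ on the $x$-side matching reduction mod $tn$ on the $y$-side, which you correctly flag as the delicate point) and both identify the new harlequin as $v^t$; the paper's version is slightly slicker because the conjugation identity absorbs the regrouping, while yours makes the combinatorial block structure explicit and needs no quandle computation at all. One small point to make explicit: Theorem \ref{thm:characterization_1} is stated for tuples that are not all equal, so in the degenerate case where the $x_i$ all coincide you should fall back on the remark that a constant tuple extends to the trivial coloring --- which is precisely the ``possibly trivial'' allowance in the statement; the paper's proof has the same implicit step, so this is a matter of presentation rather than a gap.
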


\begin{proof}
    Let $v$ be the harlequin of $(y_0,\dots,y_{tn-1})$ in $\K(m,tn)$, and define $u=v^t$. We want to prove that $(x_0,\dots,x_{n-1})$ extends to a coloring of $\K(m,n)$ with harlequin $u$. Using the fact that $y_j\rhd v=y_{j-m\pmod{tn}}$, we have
    \[
        x_i\rhd u = \left(\prod_{j=0}^{t-1}y_{it+j}\right)\rhd v^t = \prod_{j=0}^{t-1}\left(y_{it+j}\rhd v^t\right) = \prod_{j=0}^{t-1} y_{it+j-tm\pmod{tn}} = \prod_{j=0}^{t-1} y_{(i-m)t+j\pmod{tn}} = x_{i-m\pmod{n}},
    \]
    therefore Theorem \ref{thm:characterization_1} applies.
\end{proof}

\begin{remark}
    The construction in the proof of Proposition \ref{thm:mtn_to_mn} has the possibility of producing a trivial coloring. However, this is not considered a drawback. In fact, we will utilize this feature in various proofs by contradiction in the subsequent discussions. We show this behavior in the following example.
\end{remark}

\begin{example}
    $\K(2,3)$ is $\conj(\S_3)$-colorable. In fact, $(x_0,x_1,x_2)$ extends to a coloring of $\K(2,3)$, where $x_0=(2\ 3), x_1=(1\ 2)$ and $x_2=(1\ 3)$. By Proposition \ref{thm:mn_to_mtn}, define
    \begin{align*}
        y_0 &=y_3=y_6=y_9=y_{12}=x_0=(2\ 3) \\
        y_1 &=y_4=y_7=y_{10}=y_{13}=x_1=(1\ 2) \\
        y_2 &=y_5=y_8=y_{11}=y_{14}=x_2=(1\ 3)
    \end{align*}
    and observe that $(y_0,\dots,y_{14})$ extends to a coloring of $\K(2,15)$.
    However, if we apply Proposition \ref{thm:mtn_to_mn} to the previous (non-trivial) coloring of $\K(2,15)$, we obtain a tuple which extends to a trivial coloring of $\K(2,5)$, indeed:
    \begin{align*}
        z_0 &= y_0y_1y_2 = (1\ 2) \\
        z_1 &= y_3y_4y_5 = (1\ 2) \\
        z_2 &= y_6y_7y_8 = (1\ 2) \\
        z_3 &= y_{9}y_{10}y_{11} = (1\ 2) \\
        z_4 &= y_{12}y_{13}y_{14} = (1\ 2).
    \end{align*}
    A direct computation with \textsf{GAP} shows, in fact, that $\K(2,5)$ is not $\conj(\S_3)$-colorable. 
\end{example}

\begin{lemma} \label{thm:j-i_trivial_coloring}
     Let $m,n\in\N$ be such that $\gcd(m,n)=1$, and let $(x_0,\dots,x_{n-1})$ extend to a coloring of $\K(m,n)$.  If $x_i=x_j$ for some colors $x_i,x_j$ with $\gcd(j-i,n)=1$, then $(x_0,\dots,x_{n-1})$ extends the trivial coloring. In particular, we have $x_0=x_1=\dots=x_{n-1}$.
\end{lemma}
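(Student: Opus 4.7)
The plan is to leverage Theorem \ref{thm:characterization_1}\ref{thm:char3} to propagate the single coincidence $x_i=x_j$ into a global one. Let $u$ denote the harlequin of $(x_0,\dots,x_{n-1})$, so that
\[
    u\+x_\ell\+u\mo \;=\; x_\ell\rhd u \;=\; x_{\ell-m\pmod n}\quad\quad\forall\ell=0,\dots,n-1.
\]
Iterating conjugation by $u$, one gets $u^k\+x_\ell\+u^{-k}=x_{\ell-km\pmod n}$ for every $k\in\Z$.

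First, I would argue that the assumption $x_i=x_j$ \emph{propagates} under the shift by $m$. Set $d=j-i$. Conjugating the equality $x_i=x_j$ by $u^k$ and using the identity above yields $x_{i-km\pmod n}=x_{j-km\pmod n}$, that is,
\[
    x_\ell \;=\; x_{\ell+d\pmod n}\quad\quad\forall\ell\in\{i-km\pmod n : k\in\Z\}.
\]
Since $\gcd(m,n)=1$, the map $k\mapsto i-km\pmod n$ is surjective onto $\Z/n\Z$, hence $x_\ell=x_{\ell+d\pmod n}$ holds for \emph{every} $\ell\in\{0,\dots,n-1\}$.

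Next, I would use the second coprimality hypothesis $\gcd(d,n)=1$ to conclude. Starting from $x_0$ and iterating the relation just established, we obtain $x_0=x_d=x_{2d}=\ldots=x_{(n-1)d}$ modulo $n$, and since $d$ generates $\Z/n\Z$, this chain covers all indices. Therefore $x_0=x_1=\dots=x_{n-1}$, and the extension of $(x_0,\dots,x_{n-1})$ to a coloring of $\K(m,n)$ is trivial.

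There is no real obstacle here: both coprimality conditions play an explicit and parallel role — $\gcd(m,n)=1$ to spread the coincidence along all indices via conjugation by $u$, and $\gcd(d,n)=1$ to close the resulting orbit of the translation by $d$. Everything else is a direct application of Theorem \ref{thm:characterization_1}.
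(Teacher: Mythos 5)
Your proposal is correct and follows essentially the same route as the paper's proof: both conjugate the equality $x_i=x_j$ by powers of the harlequin $u$, use $\gcd(m,n)=1$ to turn these conjugations into index shifts reaching every residue, and then use $\gcd(j-i,n)=1$ to propagate the coincidence to all colors. The only difference is organizational — the paper chooses the exponent $k=m\mo(j-i)$ up front so each conjugation shifts by exactly $j-i$, whereas you first establish the translation invariance $x_\ell=x_{\ell+d}$ at every index and then close the orbit — but the underlying argument is identical.
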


\begin{proof}
    Since $\gcd(m,n)=\gcd(j-i,n)=1$, both $m$ and $j-i$ are invertible modulo $n$. Let $u$ be the harlequin of $(x_0,\dots,x_{n-1})$ and consider $k=m\mo(j-i)$. Then, for every $t\in\Z$, we have
    \[
        x_i=x_j \implies  x_i\rhd u^{tk} =x_j\rhd u^{tk} \implies x_{i-t(j-i)}= x_{j-t(j-i)}.
    \]
    In particular, $x_i=x_{i-t(j-i)}$ for all $t\in\Z$. Let $h\in\{0,\dots,n-1\}$ and define $\bar{t}=(j-i)\mo(i-h)$. Then $x_h=x_{i-\bar{t}(j-i)}$. Now, the choice of $h$ was arbitrary, therefore all the colors in the tuple are equal, hence, the coloring is trivial.
\end{proof}

\begin{corollary} \label{thm:trivial_or_distinct}
     Let $m\in\N$ and $p$ be a prime such that $p\nmid m$. Let $(x_0,\dots,x_{p-1})$ extend to a coloring of $\K(m,p)$. Then, either it extends to the trivial coloring, or the colors in the tuple $(x_0,\dots,x_{p-1})$ are all distinct. 
\end{corollary}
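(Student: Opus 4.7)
The plan is to reduce the statement to a direct application of the preceding Lemma~\ref{thm:j-i_trivial_coloring}. That lemma says: if $(x_0,\dots,x_{n-1})$ extends to a coloring of $\K(m,n)$ and two colors coincide at indices $i,j$ with $\gcd(j-i,n)=1$, then the coloring must already be trivial. The whole point of the corollary is that when $n=p$ is prime, the coprimality condition $\gcd(j-i,p)=1$ is automatically satisfied by every pair of distinct indices in $\{0,\dots,p-1\}$, so there is no room left for repeated colors outside the trivial case.

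I would argue by contrapositive. Assume that $(x_0,\dots,x_{p-1})$ does not extend to the trivial coloring, and aim to show the colors are pairwise distinct. Suppose for contradiction that $x_i = x_j$ for some $i\neq j$ in $\{0,\dots,p-1\}$. Then $1 \le |j-i| \le p-1$, so $p \nmid (j-i)$; since $p$ is prime this gives $\gcd(j-i,p)=1$. Moreover, the hypothesis $p\nmid m$ together with primality of $p$ yields $\gcd(m,p)=1$, so the torus knot $\K(m,p)$ is genuinely a knot and Lemma~\ref{thm:j-i_trivial_coloring} is applicable with $n=p$.

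Applying that lemma produces $x_0 = x_1 = \dots = x_{p-1}$, i.e.\ the tuple extends to the trivial coloring, contradicting our assumption. Hence no two colors can coincide, which is the desired conclusion. There is no real obstacle: the corollary is essentially the specialization of the previous lemma to the case where $n$ is prime, and the only content is observing that primality makes the coprimality hypothesis trivially satisfied for every pair of distinct indices.
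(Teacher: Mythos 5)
Your argument is correct and is essentially the paper's own proof: both reduce to Lemma~\ref{thm:j-i_trivial_coloring} by observing that primality of $p$ makes $\gcd(j-i,p)=1$ automatic for any two distinct indices. The contrapositive framing is a cosmetic difference only.
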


\begin{proof}
    Let $(x_0,\dots,x_{p-1})$ extend to a coloring of $\K(m,p)$, and assume that $x_i=x_j$ for some indices $0\le i<j\le p-1$. Since $p$ is prime, the condition $\gcd(j-i,p)=1$ trivially holds, so Lemma \ref{thm:j-i_trivial_coloring} applies.
\end{proof}

\subsection{The main characterization}

The importance of the divisors of the parameters becomes immediately apparent in light of the propositions presented in the preceding subsection. In this investigation, we shall first conclude our analysis of the relationship between these divisors and coloring. Our inquiry reveals that the colorability of a knot can be attributed to (at the very least) one of the prime divisors of the parameters, as demonstrated in Theorem \ref{thm:kmn_iff_kpq}. Following this, we proceed to establish a specific characterization (see Theorem \ref{thm:char_kmp}) concerning the coloring of $\K(m,n)$, which relies only on a single element in the group, in contrast to the dependence on $n$ elements as proven in Theorem $\ref{thm:characterization_1}$. 
This simplifies the process of verifying the colorability of a specific torus knot, owing to the involvement of fewer elements and the group-theoretical nature of the provided equivalent condition.
Throughout this subsection, $G$ denotes any fixed group. We start with the following arithmetic lemma.

\begin{lemma} \label{thm:gcd_lemma}
    Let $a,b\in\Z$ be such that $\gcd(a,b)=1$. Then $\gcd(a-b,ab)=1$.
\end{lemma}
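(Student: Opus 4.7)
The plan is a short prime-by-prime argument. I would let $d = \gcd(a-b, ab)$ and suppose, for contradiction, that $d > 1$, so that some prime $p$ divides $d$. Then $p \mid ab$, and by Euclid's lemma $p \mid a$ or $p \mid b$.

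In the first case, $p \mid a$ combined with $p \mid a - b$ forces $p \mid b$, so $p \mid \gcd(a,b) = 1$, a contradiction. The second case is symmetric: $p \mid b$ and $p \mid a-b$ give $p \mid a$, again contradicting $\gcd(a,b)=1$. Thus no such prime exists and $d = 1$.

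There is essentially no obstacle here; the only ingredient beyond definitions is Euclid's lemma (that a prime dividing a product divides one of the factors), together with the trivial observation that if $p$ divides two of the three numbers $a$, $b$, $a-b$ then it divides the third. The lemma will then be applied in the upcoming proofs with $a = m$ and $b = n$ (or similar reductions) to guarantee coprimality of the difference with the product when handling the parameters of $\K(m,n)$.
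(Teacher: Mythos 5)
Your proof is correct and follows essentially the same route as the paper: assume a prime $p$ divides both $a-b$ and $ab$, use Euclid's lemma to get $p\mid a$ or $p\mid b$, and derive a contradiction with $\gcd(a,b)=1$ in either (symmetric) case. The paper compresses the two cases into a "without loss of generality," but the argument is the same.
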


\begin{proof}
    By contradiction, let $p$ be a prime such that $p\mid a-b$ and $p\mid ab$. Then, without loss of generality, $p\mid b$, which, together with the condition $p\mid a-b$, implies that $p\mid a$, against the assumption of $\gcd(a,b)=1$.
\end{proof}

\begin{proposition} \label{thm:kmpq_iff_kmp_or_kmq}
     Let $m,p,q\in\N$ be such that $\gcd(m,pq)=\gcd(p,q)=1$. Then, $\K(m,pq)$ is $\conj(G)$-colorable if and only if either $\K(m,p)$ or $\K(m,q)$ is $\conj(G)$-colorable.
\end{proposition}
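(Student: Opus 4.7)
The ``$\Leftarrow$'' direction follows immediately from Proposition \ref{thm:mn_to_mtn}, applied with $t = q$ to $\K(m, p)$ or $t = p$ to $\K(m, q)$.

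For ``$\Rightarrow$'' I would argue by contrapositive. Assume that neither $\K(m, p)$ nor $\K(m, q)$ is $\conj(G)$-colorable, and let $(y_0, \dots, y_{pq-1})$ be an arbitrary $\conj(G)$-coloring of $\K(m, pq)$ with harlequin $v$; the goal is to force this coloring to be trivial. Setting $g = y_0$ and fixing an integer $r$ with $mr \equiv 1 \pmod{pq}$, iterating the harlequin relation $y_{i+m} = v^{-1} y_i v$ gives the parametrization $y_i = v^{-ir} g v^{ir}$. Writing $h = g v^{-r}$, a straightforward induction on $k$ yields the key product formula
$$
y_i y_{i+1} \cdots y_{i+k-1} = v^{-ir} h^k v^{(i+k)r}.
$$
Applying Proposition \ref{thm:mtn_to_mn} with parameters $(n, t) = (p, q)$ and $(n, t) = (q, p)$ produces induced colorings of $\K(m, p)$ and $\K(m, q)$, both necessarily trivial by assumption. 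Substituting into the displayed formula, these trivialities translate into the commutator identities $[h^q, v^{qr}] = 1$ and $[h^p, v^{pr}] = 1$, while the defining equation $v = y_{pq-m} \cdots y_{pq-1}$ of the harlequin simplifies to $h^m = v^{1-mr}$, an element of $\langle v^{pq}\rangle$ since $mr \equiv 1 \pmod{pq}$.

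The heart of the argument is then a double Bezout computation. First, $v^{pq}$ commutes with every $y_i$ (because $y_i \rhd v^{pq} = y_i$), hence with $g$ and with $h = gv^{-r}$, so $v^{pq}$ is central in $\langle h, v\rangle$. Raising the commutator identities to appropriate powers yields $[h^{pq}, v^{pr}] = [h^{pq}, v^{qr}] = 1$; combined with $[h^{pq}, v^{pq}] = 1$ and the equalities $\langle v^{pr}, v^{qr}\rangle = \langle v^r\rangle$ (from $\gcd(p, q) = 1$) and $\langle v^r, v^{pq}\rangle = \langle v\rangle$ (from $\gcd(r, pq) = 1$), these promote to $[h^{pq}, v] = 1$. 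Thus $h^{pq}$ is also central. Second, by $\gcd(m, pq) = 1$ there are integers $\lambda, \mu$ with $\lambda m + \mu pq = 1$, giving $h = (h^m)^\lambda (h^{pq})^\mu$, a product of two central elements; hence $h$ is central. Consequently $[h, v] = 1$, and the formula for $y_i$ collapses to $y_i = g$ for every $i$, so the coloring is trivial, as desired.

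The main bookkeeping hurdle I anticipate is the exponent $r = m^{-1} \pmod{pq}$ cluttering the identities throughout; the special case $m \equiv -1 \pmod{pq}$ (so $r = -1$) captures the essential Bezout idea in the cleaner form $[h^p, v^p] = [h^q, v^q] = 1$ and $h^m = v^{m+1}$.
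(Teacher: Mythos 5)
Your proof is correct, and it takes a genuinely different route from the paper's. The paper argues by contradiction and splits into the cases $m=2$ and $m>2$: in each case it manipulates telescoping products of consecutive colors via condition (ii) of Theorem \ref{thm:characterization_1}, using a CRT-style index argument when $m=2$ and deriving $x_p=x_q$ (then invoking Lemmas \ref{thm:gcd_lemma} and \ref{thm:j-i_trivial_coloring}) when $m>2$. You instead work uniformly in $m$: parametrizing every color as $y_i=v^{-ir}gv^{ir}$ reduces the two induced trivial colorings to the commutator identities $[h^q,v^{qr}]=[h^p,v^{pr}]=1$, and the two Bezout steps (first $\gcd(p,q)=1$ and $\gcd(r,pq)=1$ to centralize $h^{pq}$, then $\gcd(m,pq)=1$ to centralize $h$ itself) force $[h,v]=1$ and hence a constant coloring. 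What your approach buys is the elimination of the case split and of the auxiliary lemmas, at the cost of introducing the element $h=gv^{-r}$ and some exponent bookkeeping; one small point worth making explicit is that the parametrization $y_i=v^{-ir}gv^{ir}$ is well defined modulo $pq$ precisely because $v^{pq}$ centralizes $g$ (which you do establish, but only later in the argument), and that the degenerate case $p=1$ or $q=1$, where the statement is vacuous, should be set aside before comparing $Y_0$ with $Y_1$.
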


\begin{proof}
    One implication follows directly from Proposition \ref{thm:mn_to_mtn}. Conversely, let $(x_0,\dots,x_{pq-1})$ extend to a non-trivial coloring of $\K(m,pq)$. By contradiction, assume that both $\K(m,p)$ and $\K(m,q)$ are not non-trivially colorable, and define
    $$y_i=\prod_{k=0}^{q-1}x_{iq+k},\quad\quad\mbox{for } i=0,\dots,p-1,$$
    $$z_j=\prod_{k=0}^{p-1}x_{jq+k},\quad\quad\mbox{for } j=0,\dots,q-1.$$
    By the assumption and Proposition \ref{thm:mtn_to_mn}, the tuples $(y_0,\dots,y_{p-1})$ and $(z_0,\dots,z_{q-1})$ extend to trivial colorings of $\K(m,p)$ and $\K(m,q)$, respectively, in particular this implies that $y_0=y_1=\dots=y_{p-1}$ and $z_0=z_1=\dots=z_{q-1}$, or equivalently
    \begin{equation} \label{eqn:k2p_trivial_coloring}
        x_0x_1\dots x_{q-1} = x_q x_{q+1}\dots x_{2q-1} = \dots = x_{(p-1)q}x_{(p-1)q+1}\dots x_{pq-1},
    \end{equation}
    \begin{equation} \label{eqn:k2q_trivial_coloring}
        x_0x_1\dots x_{p-1} = x_p x_{p+1}\dots x_{2p-1} = \dots = x_{(q-1)p}x_{(q-1)p+1}\dots x_{pq-1}.
    \end{equation}
    Proceed by case analysis, distinguishing among the possible values of $m$.
    
    \begin{description}
        \item[Case $m=2$:] From Theorem \ref{thm:characterization_1}, we have that
        \begin{equation} \label{eqn:k2pq_eqns}
            x_0x_1=x_1x_2=\dots=x_{pq-1}x_0.
        \end{equation}
        Thus, for every $i,j=0,\dots,p-1$, we have
        \begin{align*}
            y_i=y_j\ &\implies \prod_{k=0}^{q-1}x_{iq+k} = \prod_{k=0}^{q-1}x_{jq+k} \\
            &\implies 
            \begin{cases}
                \displaystyle{x_{iq+0}\prod_{k=1}^{q-1}x_{iq+k} = x_{jq+0}\prod_{k=1}^{q-1}x_{jq+k}} \\
                \displaystyle{\prod_{k=0}^{q-2}x_{iq+k} x_{(i+1)-1} = \prod_{k=0}^{q-2}x_{jq+k} x_{(j+1)-1}}
            \end{cases}
            \implies
            \begin{cases}
                x_{iq} = x_{jq} \\
                x_{(i+1)-1} = x_{(j+1)-1} 
            \end{cases}
        \end{align*}
        where we have canceled the product out because it is made by an even number of factors with consecutive indices, exploiting the equations of Theorem \ref{thm:characterization_1}\ref{thm:char2}, namely equations (\ref{eqn:k2pq_eqns}). Now, deleting the first and the last term of each of the equations (\ref{eqn:k2p_trivial_coloring}) and iterating this procedure, we obtain that
        \begin{equation} \label{eqn:k2pq_partial1}
            x_{iq+k} = x_{jq+k}\quad\quad\forall i,j=0,\dots,p-1,\,\forall k=0,\dots,q-1.
        \end{equation}
        Proceeding in the same way, using $(z_0,\dots,z_{q-1})$, we get
        \begin{equation} \label{eqn:k2pq_partial2}
            x_{sp+h} = x_{tp+h}\quad\quad\forall s,t=0,\dots,q-1,\,\forall h=0,\dots,p-1.
        \end{equation}
        We may rewrite equations (\ref{eqn:k2pq_partial1}) and (\ref{eqn:k2pq_partial2}) in a more compact and equivalent form
        \begin{align} 
            x_{iq+k} &= x_{k}\quad\quad\forall i=0,\dots,p-1,\,\forall k=0,\dots,q-1, \label{eqn:k2pq_partial3}\\
            x_{jq+h} &= x_{h}\quad\quad\forall j=0,\dots,q-1,\,\forall h=0,\dots,p-1. \label{eqn:k2pq_partial4}
        \end{align}
        Now, consider any $c\in\{0,\dots pq-1\}$. By the conditions on $m,p$ and $q$, we may assume that $2<p<q$. By the division with reminder theorem we have $c=aq+t, t=bp+r$ for some unique $a,b,t,r\in\mathbb{Z}$ with $0\le t<q$ and $0\le r \le p-1 <q-1$, hence $c=aq+bp+r$. Consider the Diophantine equation $jp+iq=r$, which has solution $(i,j)\in\mathbb{Z}^2$ because $1=\gcd(p,q)\mid r$. Then we have $c=(b+j)p+(a+i)q$, and, computing indices modulo $pq$ together with equations (\ref{eqn:k2pq_partial3}) and (\ref{eqn:k2pq_partial4}), we have
        $$x_c=x_{(b+j)p+(a+i)q} = x_{(b+j)p+(a+i)q+0 } = x_{(a+i)q+0} = x_0.$$
        Now, the choice of $c$ was arbitrary, therefore all the colors $x_0,\dots,x_{pq-1}$ are equal, which is a contradiction.
        
        \item[Case $m>2$:] Since $\gcd(q,m)=1$, consider $t=q\mo\pmod{m}$. By the equations (\ref{eqn:k2p_trivial_coloring}), multiplying $t$ elements with consecutive indices (possibly with repetitions), we have
        \begin{align*}
            y_0y_1\dots y_{t-1}=y_1y_2\dots y_{t} \implies (x_0x_1\dots x_{q-1})y_1\dots y_{t-1} = (x_{q}x_{q+1}\dots x_{2q-1})y_2\dots y_{t} \implies x_0 = x_{q},
        \end{align*}
        where the last implication holds because the products are made by $tq-1$ factors with consecutive indices, which is divisible by $m$, thus Theorem \ref{thm:characterization_1}\ref{thm:char2} holds.
        Proceeding in the same way, using $(z_0,\dots,z_{q-1})$, we obtain that $x_0 = x_{p}$. Thus, we have $x_{p}=x_{q}$ and $\gcd(p,q)=1$ by assumption. Because of Lemma \ref{thm:gcd_lemma} we also get $\gcd(p-q,pq)=1$, which implies that all the colors are equal because of Lemma \ref{thm:j-i_trivial_coloring}, and this is a contradiction. \qedhere
    \end{description}
\end{proof}

\begin{proposition} \label{thm:kmpe_iff_kmp}
     Let $m,p,e\in\N$ such that $p$ is prime and $p\nmid m$. Then, $\K(m,p^e)$ is $\conj(G)$-colorable if and only if $\K(m,p)$ is $\conj(G)$-colorable.
\end{proposition}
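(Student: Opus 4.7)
The forward implication is immediate from Proposition \ref{thm:mn_to_mtn} with $t = p^{e-1}$, since $\gcd(m, p^e) = 1$. I address the converse by induction on $e$, with $e = 1$ tautological. In the inductive step, assume $(x_0, \dots, x_{p^e - 1})$ extends to a non-trivial $\conj(G)$-coloring of $\K(m, p^e)$ with harlequin $u$, and for each shift $s \in \{0, 1, \dots, p-1\}$ form the aggregation
\[
    y^{(s)}_i := \prod_{j=0}^{p-1} x_{ip + j + s \pmod{p^e}}, \qquad i = 0, \dots, p^{e-1} - 1.
\]
Using that any product of $mp$ consecutive $x$'s (read cyclically modulo $p^e$) equals $u^p$ by Theorem \ref{thm:characterization_1}\ref{thm:char2}, one checks directly that each $(y^{(s)}_0, \dots, y^{(s)}_{p^{e-1}-1})$ extends to a (possibly trivial) coloring of $\K(m, p^{e-1})$ with harlequin $u^p$. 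If some shift produces a non-trivial tuple, the inductive hypothesis at once yields a non-trivial $\conj(G)$-coloring of $\K(m, p)$, and we are done.

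It remains to handle the case in which every shift is trivial: $y^{(s)}_0 = \dots = y^{(s)}_{p^{e-1}-1}$ for each $s$. Here I adapt the cancellation step from the $m>2$ argument in Proposition \ref{thm:kmpq_iff_kmp_or_kmq}. Since $\gcd(p, m) = 1$, fix $t \equiv p^{-1} \pmod m$, so that $m \mid tp - 1$. For every $k$ and $s$, the identity
\[
    y^{(s)}_k \, y^{(s)}_{k+1} \cdots y^{(s)}_{k+t-1} = y^{(s)}_{k+1} \, y^{(s)}_{k+2} \cdots y^{(s)}_{k+t}
\]
(both sides are the $t$-th power of the common value) expands into an equality of two products of $tp$ consecutive $x$'s, starting at indices $kp+s$ and $(k+1)p+s$ respectively. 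Stripping the leading factor from each side leaves two products of $tp - 1$ consecutive $x$'s, and both equal $u^{(tp-1)/m}$ as a concatenation of $(tp-1)/m$ blocks of $m$ consecutive factors. Cancelling yields $x_{kp+s} = x_{(k+1)p+s}$ for every admissible $k, s$, so the tuple $(x_0, \dots, x_{p^e - 1})$ is $p$-periodic modulo $p^e$.

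Setting $a_s := x_s$ for $s = 0, \dots, p-1$, the non-triviality of the original coloring forces the $a_s$ to be not all equal. By periodicity, Theorem \ref{thm:characterization_1}\ref{thm:char2} applied to any $m$ consecutive indices of the original coloring transports verbatim to the same condition for $(a_0, \dots, a_{p-1})$ as a candidate coloring of $\K(m, p)$, with the same harlequin $u$. Hence $(a_0, \dots, a_{p-1})$ extends to a non-trivial coloring of $\K(m, p)$, closing the induction. The main obstacle is the all-trivial-reductions case: one must verify that every aggregation $y^{(s)}$ genuinely defines a coloring of $\K(m, p^{e-1})$ and that the cancellation producing periodicity goes through uniformly using only the hypothesis $\gcd(p, m) = 1$; in particular the degenerate sub-case $m = 2$ forces $t = 1$, and $m \mid p - 1$ still holds because $p$ is then an odd prime.
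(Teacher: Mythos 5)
Your proof is correct, but it follows a genuinely different route from the paper's. The paper performs a single aggregation into $p$ blocks of length $p^{e-1}$, reducing $\K(m,p^e)$ to $\K(m,p)$ in one step; it then splits into the cases ``all colors distinct'', ``a coincidence across residue classes mod $p$'' (killed by Lemma \ref{thm:j-i_trivial_coloring}), and ``$p$-periodic'', and in the all-distinct case it derives a contradiction from triviality of the aggregate by a cancellation argument that treats $m=2$ and $m>2$ separately. You instead induct on $e$, aggregate into blocks of length $p$ over all $p$ cyclic shifts, and observe that if every shifted aggregate is trivial then the cancellation with $t\equiv p^{-1}\pmod m$ forces the original tuple to be $p$-periodic, whence it restricts directly to a non-trivial coloring of $\K(m,p)$; otherwise the inductive hypothesis applies. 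Your version buys two things: the choice $t\equiv p^{-1}\pmod m$ subsumes the $m=2$ case uniformly (there $t=1$ and $2\mid p-1$ since $p$ is odd), so no case split on $m$ is needed; and using all $p$ shifts turns ``all aggregates trivial'' into a positive structural conclusion (periodicity) rather than a contradiction, which sidesteps the paper's reliance on Corollary \ref{thm:trivial_or_distinct} and on an exhaustive trichotomy of coincidence patterns. The price is the extra bookkeeping of verifying that each shifted aggregate is a legitimate coloring of $\K(m,p^{e-1})$ -- which does hold, since condition \ref{thm:char2} of Theorem \ref{thm:characterization_1} is invariant under cyclic shifts, so Proposition \ref{thm:mtn_to_mn} applies to each shifted tuple -- and of checking that block indices taken modulo $p^{e-1}$ still concatenate into cyclically consecutive indices modulo $p^e$, which works because $(i+p^{e-1})p\equiv ip\pmod{p^e}$.
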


\begin{proof}
    One implication follows directly from Proposition \ref{thm:mn_to_mtn}. Conversely, let $(x_0,\dots, x_{p^e-1})$ extend to a non-trivial coloring of $\K(m,p^e)$. There are three possible cases:
    \begin{enumerate}
        \item[(a)] $x_i=x_j$ if and only if $i\equiv j\pmod{p}$.
        \item[(b)] There are $i,j\in\{0,\dots,p^{e-1}-1\}$ such that $x_i=x_j$ if and only if $i\not\equiv j\pmod{p}$.
        \item[(c)] All the $x_i$'s are different colors.
    \end{enumerate}
    Observe that case (a) provides a non-trivial coloring for $\K(m,p)$, because the equations in Theorem \ref{thm:characterization_1} hold already. Case (b) is impossible because it would contradict Lemma \ref{thm:j-i_trivial_coloring}, being $\gcd(j-i,p^e)=1$. Assume case (c). Let $u$ be the harlequin of $(x_0,\dots, x_{p^e-1})$, and define
    $$y_i=\prod_{k=0}^{p^{e-1}-1}x_{ip^{e-1}+k},\quad\quad\mbox{for } i=0,\dots,p-1.$$
    From Proposition \ref{thm:mtn_to_mn} we know that it extends to a coloring of $\K(m,p)$ with harlequin $v=u^{p^{e-1}}$. Proceed by case analysis, distinguishing among the possible values of $m$.
    
    \begin{description}
        \item[Case $m=2$:] If the colors $y_0,\dots,y_{p-1}$ are all different, the proof is completed. Assume that two of them are equal, say $y_i=y_j$ for some distinct $i,j\in\{0,\dots,p-1\}$. Then
        \begin{align*}
            y_i=y_j &\implies \prod_{k=0}^{p^{e-1}-1}x_{ip^{e-1}+k} = \prod_{k=0}^{p^{e-1}-1}x_{jp^{e-1}+k} \\ 
            &\implies x_{ip^{e-1}}\prod_{k=1}^{p^{e-1}-1}x_{ip^{e-1}+k} = x_{jp^{e-1}}\prod_{k=1}^{p^{e-1}-1}x_{jp^{e-1}+k} \implies x_{ip^{e-1}} = x_{jp^{e-1}}
        \end{align*}
        where we have canceled the product out because it is made by an even number of factors with consecutive indices, exploiting the equations of Theorem \ref{thm:characterization_1}\ref{thm:char2}. This leads to a contradiction because we are assuming that all the colors are different, hence this sub-case is indeed impossible. 
        
        \item[Case $m>2$:] From Corollary \ref{thm:trivial_or_distinct} we know that either the elements of the tuple $(y_0,\dots,y_{p-1})$ are all distinct, or $(y_0,\dots,y_{p-1})$ extends to the trivial coloring. In the first case, the proof is completed. Assume that $y_0=y_1=\dots=y_{p-1}$. Since $\gcd(m,p)=1$, consider $t=p\mo\pmod{m}$. Multiplying $t$ elements with consecutive indices (possibly with repetitions), we have
        \begin{align*}
            y_0y_1\dots y_{t-1}=y_1y_2\dots y_{t} \implies (x_0x_1\dots x_{p-1})y_1\dots y_{t-1} = (x_{p}x_{p+1}\dots x_{2p-1})y_2\dots y_{t} \implies x_0 = x_{p},
        \end{align*}
        where the last implication holds because the products are made by $tp-1$ factors, which is divisible by $m$, thus Theorem \ref{thm:characterization_1}\ref{thm:char2} holds. This leads to a contradiction because we are assuming that all the colors of $(x_0,\dots,x_{p^e-1})$ are different, hence also this sub-case is impossible. \qedhere
    \end{description}
\end{proof}

\begin{proposition} \label{thm:kmn_iff_kmq} 
     Let $m,n\in\N$ be such that $\gcd(m,n)=1$. Then, $\K(m,n)$ is $\conj(G)$-colorable if and only if there is a prime factor $q$ of $n$ such that $\K(m,q)$ is $\conj(G)$-colorable.
\end{proposition}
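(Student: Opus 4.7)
The plan is to prove both directions by combining Propositions \ref{thm:mn_to_mtn}, \ref{thm:kmpq_iff_kmp_or_kmq}, and \ref{thm:kmpe_iff_kmp}; essentially, this statement is the assembly of those three pieces into a single global reduction to a prime.

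For the easy (backward) direction, I would start by assuming that $\K(m,q)$ is $\conj(G)$-colorable for some prime factor $q$ of $n$. Writing $n=tq$ and noting that $\gcd(m,tq)=\gcd(m,n)=1$ by hypothesis (and also $\gcd(m,q)=1$ since $q\mid n$), I would apply Proposition \ref{thm:mn_to_mtn} directly to conclude that $\K(m,n)=\K(m,tq)$ is $\conj(G)$-colorable.

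For the forward direction, I would write the prime factorization $n=p_1^{e_1}\cdots p_k^{e_k}$. Since $\gcd(m,n)=1$, each $p_i$ satisfies $p_i\nmid m$, so Proposition \ref{thm:kmpe_iff_kmp} is applicable to every prime power factor. The strategy is then induction on $k$, the number of distinct prime factors of $n$. The base case $k=1$ is immediate: $n=p_1^{e_1}$, and Proposition \ref{thm:kmpe_iff_kmp} reduces colorability of $\K(m,p_1^{e_1})$ to colorability of $\K(m,p_1)$. For the inductive step, I would set $p=p_1^{e_1}$ and $q=p_2^{e_2}\cdots p_k^{e_k}$. The hypotheses $\gcd(m,pq)=\gcd(m,n)=1$ and $\gcd(p,q)=1$ are satisfied, so Proposition \ref{thm:kmpq_iff_kmp_or_kmq} applies and yields that either $\K(m,p)=\K(m,p_1^{e_1})$ or $\K(m,q)$ is $\conj(G)$-colorable. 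In the first subcase, Proposition \ref{thm:kmpe_iff_kmp} reduces to $\K(m,p_1)$; in the second subcase, the inductive hypothesis applied to $q$ (which has $k-1$ distinct prime factors and still satisfies $\gcd(m,q)=1$) produces a prime factor $q'$ of $q$ (hence of $n$) with $\K(m,q')$ colorable.

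I do not expect a real obstacle here: once the induction is set up correctly, each step is a direct citation of a previously proved proposition. The only thing to be careful about is the bookkeeping of coprimality hypotheses at each invocation, but these all follow automatically from $\gcd(m,n)=1$ together with the fact that the parameters $p,q,p_i^{e_i}$ appearing in the induction are pairwise coprime divisors of $n$.
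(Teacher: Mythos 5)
Your proposal is correct and follows exactly the paper's own route: the backward direction via Proposition \ref{thm:mn_to_mtn}, and the forward direction by induction on the number of distinct prime factors of $n$, peeling off one prime power at a time with Proposition \ref{thm:kmpq_iff_kmp_or_kmq} and reducing prime powers to primes with Proposition \ref{thm:kmpe_iff_kmp}. The paper merely states this as "a simple induction argument"; you have supplied the bookkeeping it leaves implicit, and the coprimality checks you perform at each step are all valid.
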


\begin{proof}
    One direction follows from Proposition \ref{thm:mn_to_mtn}. For the other, write the prime factorization of $n=\prod_{i=1}^k p_i^{e_i}$ and conclude with a simple  induction argument using Propositions  \ref{thm:kmpq_iff_kmp_or_kmq} and \ref{thm:kmpe_iff_kmp}.
\end{proof}

\begin{theorem} \label{thm:kmn_iff_kpq}
    Let $m,n\in\N$ be such that $\gcd(m,n)=1$. Then, $\K(m,n)$ is $\conj(G)$-colorable if and only if there is a prime factor $p$ of $m$ and a prime factor $q$ of $n$ such that $\K(p,q)$ is $\conj(G)$-colorable.
\end{theorem}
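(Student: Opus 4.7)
The plan is to combine Proposition \ref{thm:kmn_iff_kmq}, which performs a prime reduction on the second parameter, with the symmetry provided by Theorem \ref{thm:classification_torus_knots}, which allows swapping the two parameters. Applying the reduction, then swapping, then reducing again, and swapping back will deliver the two-sided prime reduction asserted here.

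For the forward direction, assume $\K(m,n)$ is $\conj(G)$-colorable. Since $\K(1,k)$ is the unknot, which admits only the trivial coloring, we may harmlessly assume $m,n \ge 2$. Proposition \ref{thm:kmn_iff_kmq} then yields a prime $q \mid n$ such that $\K(m,q)$ is $\conj(G)$-colorable; note $q \ge 2$ and $\gcd(m,q)=1$ since $q \mid n$ and $\gcd(m,n)=1$. Theorem \ref{thm:classification_torus_knots} applies (both parameters are $\ge 2$) and gives $\K(m,q) \cong \K(q,m)$, so $\K(q,m)$ is colorable. A second application of Proposition \ref{thm:kmn_iff_kmq}, this time to the pair $(q,m)$, produces a prime $p \mid m$ with $\K(q,p)$ colorable, and one final use of the classification theorem turns this into colorability of $\K(p,q)$.

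For the reverse direction, suppose $p \mid m$ and $q \mid n$ are primes with $\K(p,q)$ colorable. Since $p \mid m$ and $q \mid n$ and $\gcd(m,n)=1$, we have $\gcd(p,q)=1$, $\gcd(m,q)=1$, and $\gcd(m,n)=1$. Writing $m = (m/p)\,p$ with $\gcd((m/p)p, q) = \gcd(m,q) = 1$, Proposition \ref{thm:mn_to_tmn} upgrades the coloring to $\K(m,q)$. Then writing $n = (n/q)\,q$ with $\gcd(m, (n/q)q) = \gcd(m,n) = 1$, Proposition \ref{thm:mn_to_mtn} upgrades it further to $\K(m,n)$.

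There is no real obstacle: the two nontrivial reduction results have already been established, and the statement here is essentially a bookkeeping corollary that leverages the symmetric roles of the two torus-knot parameters. The only thing to watch for is the edge case $m=1$ or $n=1$, which is handled by observing that the unknot is not colorable so the claim is vacuous on that side, while on the other side there is no prime factor to choose.
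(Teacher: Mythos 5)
Your proof is correct and follows essentially the same route as the paper: the forward direction by applying Proposition \ref{thm:kmn_iff_kmq} twice with Theorem \ref{thm:classification_torus_knots} used to swap parameters in between, and the converse via Propositions \ref{thm:mn_to_tmn} and \ref{thm:mn_to_mtn}. Your explicit attention to the $m=1$ or $n=1$ edge case and the $\ge 2$ hypothesis of the classification theorem is a small tidiness improvement over the paper's terser argument, but the substance is identical.
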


\begin{proof}
    One direction follows from Propositions \ref{thm:mn_to_tmn} and \ref{thm:mn_to_mtn}. For the converse, we can use Proposition \ref{thm:kmn_iff_kmq}, together with Theorem \ref{thm:classification_torus_knots}, to 
    infer that
    \begin{align*}
        \K(m,n)\mbox{ is colorable }&\implies
        \K(m,q)\mbox{ is colorable for some } q\mid n \\
        &\implies\K(q,m)\mbox{ is colorable for some } q\mid n \\
        &\implies\K(q,p)\mbox{ is colorable for some } q\mid n\mbox{ and } p\mid m \\
        &\implies\K(p,q)\mbox{ is colorable for some } p\mid m\mbox{ and } q\mid n.\qedhere
    \end{align*}
\end{proof}

\begin{remark} \label{rmk:parameters_assumptions}
    Due to Theorem \ref{thm:kmn_iff_kpq}, when examining the colorability of $\K(m,n)$ we can assume that $n$ is prime and that $m\nmid n$. We will see that assuming $m$ to be prime as well is inconsequential. It is important to note that if $m=1$ then $\K(1,n)$ is only trivially colorable, so we can also assume $m\neq 1$. 
\end{remark}

We present now the main results of this paper, related to the study of conjugation quandle coloring of torus knots. 

\begin{theorem} \label{thm:char_kmp}
    Let $G$ be a group, and let $m,p\in\N$ be such that $m\ge 2$ and $p$ is prime with $p\nmid m$. Then $\K(m,p)$ is $\conj(G)$-colorable if and only if there is $u\in G$ such that the centralizers $\C_G(u^p)\setminus\C_G(u)\neq\emptyset$.
\end{theorem}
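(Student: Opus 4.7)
The plan is to apply Theorem \ref{thm:characterization_1}(iii) in both directions, so that the existence of a non-trivial $\conj(G)$-coloring of $\K(m,p)$ becomes the existence of a tuple $(x_0,\dots,x_{p-1})$, not all equal, for which the product $u=\prod_{j=0}^{m-1}x_{p-m+j\pmod p}$ satisfies the shift relation $x_i\rhd u=x_{i-m\pmod p}$ at every index~$i$.

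For the forward implication, begin with such a coloring and let $u$ be its harlequin. Iterating the shift relation $p$ times yields $x_0\rhd u^p=x_{-pm\pmod p}=x_0$, so $u^p$ commutes with $x_0$ and therefore $x_0\in\C_G(u^p)$. By Corollary \ref{thm:trivial_or_distinct} the colors are pairwise distinct (since $p$ is prime and $p\nmid m$), so $x_0\rhd u=x_{-m\pmod p}\ne x_0$ and $x_0\notin\C_G(u)$. This exhibits $x_0\in\C_G(u^p)\setminus\C_G(u)$, which is therefore non-empty.

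For the converse, given $u\in G$ and $x\in\C_G(u^p)\setminus\C_G(u)$, the plan is to construct the desired tuple explicitly. Let $t\in\{1,\dots,p-1\}$ be the inverse of $m$ modulo $p$ and write $mt=1+cp$ for the associated integer $c$. Define $x_i:=u^{-it}xu^{it}$ for $i=0,\dots,p-1$; since $u^p$ commutes with $x$, the expression depends on $i$ only modulo $p$, so the tuple is well-defined. A direct computation, sliding powers of $u^p$ past $x$ via commutativity, establishes the shift identity $x_i\rhd u=x_{i-m\pmod p}$. Non-triviality is automatic: $x_0=x_1$ would place $x$ in $\C_G(u^t)$, and combined with $x\in\C_G(u^p)$ and a Bezout argument on $\gcd(t,p)=1$ this would force $x\in\C_G(u)$, contradicting the hypothesis.

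The delicate step, and the one I expect to be the main obstacle, is verifying the harlequin identity $u=\prod_{j=0}^{m-1}x_{p-m+j\pmod p}$ for the constructed tuple, so that Theorem \ref{thm:characterization_1}(iii) actually applies. A telescoping computation on the right-hand side (each consecutive pair of factors contributes a central $u^{-t}$ between the $x$-entries) rewrites the product as $u\cdot(xu^{-t})^{m-1}x\cdot u^{(m-1)t-1}$, reducing the required identity to the single algebraic equation $(xu^{-t})^m=u^{-cp}$. Extracting this equation from the bare centralizing condition on $x$ will need the primality of $p$ and the invertibility of $m$ modulo~$p$; once it is in hand, Theorem \ref{thm:characterization_1}(iii) immediately delivers the desired $\conj(G)$-coloring of $\K(m,p)$.
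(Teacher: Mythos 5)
Your forward direction is correct and matches the paper's: the harlequin $u$ of a non-trivial coloring satisfies $x_0\rhd u^p=x_0$, and since the colors of $\K(m,p)$ are pairwise distinct by Corollary \ref{thm:trivial_or_distinct}, also $x_0\rhd u=x_{-m\pmod p}\neq x_0$, so $x_0\in\C_G(u^p)\setminus\C_G(u)$.

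The converse is where the genuine gap sits, and you have put your finger on exactly the right spot. Theorem \ref{thm:characterization_1}(iii) is an equivalence only for the specific element $u=\prod_{j=0}^{m-1}x_{p-m+j\pmod p}$; verifying the shift relation $x_i\rhd u=x_{i-m\pmod p}$ for some other $u$ proves nothing. Your tuple $x_i=u^{-it}xu^{it}$ does satisfy the shift relation for the given $u$, but the ``delicate step'' you defer --- that this $u$ coincides with the product of the corresponding colors, i.e.\ your identity $(xu^{-t})^m=u^{-cp}$ --- cannot be extracted from $x\in\C_G(u^p)\setminus\C_G(u)$, and no alternative argument can close the gap, because the converse implication is false as stated. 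Take $G$ to be the nonabelian group of order $21$, $p=3$, $m=2$, and $u$ any element of order $3$: then $\C_G(u^3)=\C_G(1)=G$ while $\C_G(u)=\langle u\rangle$, so the centralizer condition holds. Yet for $\K(2,3)$ condition (ii) of Theorem \ref{thm:characterization_1} reduces to the braid relation $x_0x_1x_0=x_1x_0x_1$, which gives $(x_0x_1x_0)^2=(x_0x_1)^3$; in a group of odd order squaring is a bijection, so $x_0x_1x_0$ is the unique square root of $(x_0x_1)^3$ and hence a power of $x_0x_1$, and commuting $x_0x_1x_0$ past $x_0x_1$ forces $x_0x_1=x_1x_0$ and then $x_0=x_1$. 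So $\K(2,3)$ admits only the trivial $\conj(G)$-coloring although the centralizer condition is satisfied. (For what it is worth, the paper's own proof elides the very same step: it declares the tuple $x_{-im}=u^ix_0u^{-i}$ to be a coloring on the strength of $x_{-im}\rhd u=x_{-im-m}$ alone, never checking that $u$ is the harlequin of that tuple.) The condition actually equivalent to colorability is the existence of $A,B\in G$ with $A^m=B^p$ and $AB\neq BA$ (a homomorphism of the torus knot group $\langle a,b\mid a^m=b^p\rangle$ with nonabelian image); the centralizer condition is the strictly weaker consequence obtained by taking $u=B$ and forgetting that $u^p$ must be an $m$-th power.
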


\begin{proof}
    We prove that $\K(m,p)$ is $\conj(G)$-colorable if and only if there are $x_0,u\in G$ such that 
    \[
    \begin{cases}
        ux_0u\mo\neq x_0 \\
        u^px_0u^{-p}=x_0,
    \end{cases}
    \]
    which is equivalent to the condition on the centralizers in the statement. 

    Let $(x_0,\dots x_{p-1})$ extend to a non-trivial coloring of $\K(m,p)$ with harlequin $u$. Because of Corollary \ref{thm:trivial_or_distinct}, all the colors must be distinct. Because of Theorem \ref{thm:characterization_1}, for every $t\in\{0,\dots,p-1\}$ we have $x_0\rhd u^k=x_t$ for $k=-m\mo t\pmod{p}$, that is, all the colors can be obtained from $x_0$ and (a suitable power of) $u$. Therefore, we have $x_0,u\in G$ such that
    \[
    \begin{cases}
        ux_0u\mo=x_{-m} \\
        ux_{-m}u\mo=x_{-2m} \\
        \vdots \\
        ux_{-(p-1)}u\mo=x_{-p}=x_0
    \end{cases}
    \]
    which is equivalent to 
    \[
    \begin{cases}
        ux_0u\mo=x_{-m}\neq x_0 \\
        u^px_0u^{-p}=x_0 \\
    \end{cases}
    \]
    where the second equation is obtained by combining all the previous ones. Note that, since all the colors are different, in this setting, we do not need to require also $u^ix_0u^{-i}\neq x_0$ for all $i=0,\dots,p-1$, because if we had both $u^i,u^p\in\C_G(x_0)$, being the centralizer a subgroup of $G$, we would also have $u^{\gcd(i,p)}=u^1=u\in\C_G(x_0)$, which is forbidden by the first equation.
    Conversely, let $x_0,u\in G$ as in the statement, and define $x_{-im}=u^ix_0u^{-i}$ for $i=\{0,\dots,p-1\}$. This is indeed a coloring because, by definition of conjugation quandle, we have $x_{-im}\rhd u=x_{-im-m}$, and $x_{-pm}=x_0$. Moreover, since $\gcd(m,p)=1$, this is enough to define all the colors.
\end{proof}

\begin{remark} \label{rmk:m_irrelevant}
    Observe that the colorability of $\K(m,p)$ is independent on $m$. This allows us to designate the initial entry as any convenient number, provided that the second parameter is assumed to be prime.
\end{remark}

\begin{remark} \label{rmk:fast_trick}
    In the notation of Theorem \ref{thm:char_kmp}, note that if there is an element $u\in G$ of order $p$ such that $u\not\in\mathsf{Z}(G)$, then $\K(m,p)$ is $\conj(G)$-colorable.
\end{remark}

\section{Coloring with particular groups} \label{sec:particular_groups}

We now apply the theorems mentioned above to specific cases. We initiate the analysis by considering matrix groups, and then proceed with dihedral and symmetric groups.

\subsection[General linear group]{General linear group} \label{ssc:GL}

In this subsection, with $p$ and $q$ we denote two prime numbers, and the group $\GL(2,q)$ is denoted by $G$. Our objective is to derive a numerical characterization for the colorability of $\K(m,p)$ solely in terms of $p$ and $q$, as we have determined that the parameter $m$ is irrelevant (see Remark \ref{rmk:m_irrelevant}).

\begin{remark}
    The following table displays the representatives of the conjugacy classes of $\GL(2,q)$, together with their centralizers. In virtue of Theorem \ref{thm:char_kmp}, for a representative $u$ of each conjugacy class, we want to compute when the condition $\C_{\GL(2,q)}(u^p)\setminus\C_{\GL(2,q)}(u)\neq\emptyset$ holds, so we aim to recreate the table with $u^p$ instead of $u$ and compare the results.
\end{remark}

\begin{center}
    \begin{tabular}{ c|l|l }
    Type & $u$ & $\C_{\GL(2,q)}(u)$ \\
    \hline
    Type 1 &
    $\begin{pmatrix}
        a & 0 \\
        0 & a
    \end{pmatrix}\quad a\neq 0$ & 
    $\GL(2,q)$ \Tstrut \Bstrut\\[15pt]
    
    Type 2 & 
    $\begin{pmatrix}
        a & 0 \\
        0 & b
    \end{pmatrix}\quad 0<a<b$ & 
    $\left\{\begin{pmatrix}
        u & 0 \\
        0 & v
    \end{pmatrix}\in\GL(2,q)\colon u,v\neq 0\right\}$ \\[15pt]

    Type 3 & 
    $\begin{pmatrix}
        a & 1 \\
        0 & a
    \end{pmatrix}\quad a\neq 0$ & 
    $\left\{\begin{pmatrix}
        u & v \\
        0 & u
    \end{pmatrix}\in\GL(2,q)\colon u\neq 0\right\}$ \\[15pt] 

    Type 4 & 
    $\begin{pmatrix}
        0 & 1 \\
        a & b
    \end{pmatrix}\quad x^2-bx-a\mbox{ irreducible}$ & 
    $\left\{\begin{pmatrix}
        u & v \\
        au & u+bv
    \end{pmatrix}\in\GL(2,q)\colon u\neq 0\mbox{ or } v\neq 0\right\}$ \\  
    \end{tabular}
\end{center}

\begin{proposition} \label{prop:type1}
    Let $u\in G$ be a matrix of type 1. Then $\C_G(u)=\C_G(u^p)=G$.
\end{proposition}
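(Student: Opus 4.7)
The plan is to exploit the fact that a type 1 matrix is, by definition, a nonzero scalar matrix $u = aI$ with $a \neq 0$, and that scalar matrices lie in the center of $\GL(2,q)$.

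First I would note that for any $g \in G$, we have $gu = g(aI) = a(gI) = a(Ig) = (aI)g = ug$, so $g \in \C_G(u)$. This immediately yields $\C_G(u) = G$.

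Next I would compute $u^p = (aI)^p = a^p I$, which is again a scalar matrix. By exactly the same argument as above (or by invoking the previous sentence with $a^p$ in place of $a$), we get $\C_G(u^p) = G$.

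There is no real obstacle here: the proposition is essentially a sanity check that the type 1 case of the case analysis yields no new element witnessing the centralizer condition of Theorem \ref{thm:char_kmp}, since both centralizers coincide with the whole group. The substantive work will appear in the subsequent propositions handling types 2, 3, and 4.
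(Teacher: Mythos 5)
Your proof is correct and follows the same approach as the paper, which simply observes that a type 1 matrix is scalar, hence central, and that its $p$-th power is again scalar. You have merely spelled out the one-line argument in more detail.
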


\begin{proof}
    The matrix power $u^p$ is still a scalar matrix, hence its centralizer is again maximal.
\end{proof}

\begin{proposition} \label{prop:type2}
    Let $u\in G$ be a matrix of type 2. Then  $\C_G(u^p)\setminus\C_G(u)\neq\emptyset$ if and only if $p\mid q-1$.
\end{proposition}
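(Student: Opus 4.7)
The plan is to turn the centralizer condition on a type~2 matrix $u = \mathrm{diag}(a,b)$ (with $a \neq b$) into an arithmetic condition on its eigenvalues, and then read off the divisibility from the cyclic structure of $\mathbb{F}_q^*$. First, I would compute $u^p = \mathrm{diag}(a^p, b^p)$ and split into two sub-cases via the classification table: if $a^p \neq b^p$, then $u^p$ is again of type~2, whose centralizer is the diagonal torus, so $\C_G(u^p) = \C_G(u)$ and the difference is empty; if instead $a^p = b^p$, then $u^p$ is scalar (type~1), $\C_G(u^p) = G$, and $\C_G(u^p) \setminus \C_G(u) \neq \emptyset$. Thus the centralizer condition reduces to the algebraic equation $a^p = b^p$ with $a \neq b$, equivalently to $ab^{-1}$ having order exactly $p$ in $\mathbb{F}_q^*$.

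The biconditional with $p \mid q-1$ then follows from the cyclicity of $\mathbb{F}_q^*$, which has order $q-1$. For $\Rightarrow$: if the given $u$ satisfies the centralizer property, the element $ab^{-1}$ is a non-trivial $p$-th root of unity in $\mathbb{F}_q^*$, so $p$ must divide $q-1$. For $\Leftarrow$: assuming $p \mid q-1$, I would exhibit an explicit type~2 witness $u = \mathrm{diag}(1,\zeta)$, where $\zeta$ is any primitive $p$-th root of unity in $\mathbb{F}_q^*$, for which $u^p = I$ and the centralizer condition holds trivially.

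I anticipate no real algebraic obstacle — the whole reduction uses only the centralizer entries from the classification table, and the remainder is elementary arithmetic in $\mathbb{F}_q^*$. The one point I would flag carefully in the write-up is the scope of the quantifier on $u$: read strictly pointwise, the $\Leftarrow$ direction fails (for instance $u = \mathrm{diag}(1,3)$ in $\mathbb{F}_7$ with $p=3$ satisfies $p \mid q-1 = 6$, yet $u^3 = \mathrm{diag}(1,6)$ remains type~2, so $\C_G(u^3) = \C_G(u)$). The proposition therefore has to be read as asserting the \emph{existence} of some type~2 matrix realising the centralizer condition, which is precisely the form in which it is needed for applying Theorem~\ref{thm:char_kmp} to deduce $\conj(G)$-colorability of $\K(m,p)$.
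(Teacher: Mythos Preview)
Your argument is correct and matches the paper's proof essentially line for line: compute $u^p=\mathrm{diag}(a^p,b^p)$, observe that the centralizer strictly enlarges precisely when $u^p$ is scalar, rewrite this as $(ab^{-1})^p=1$ with $ab^{-1}\neq 1$, and conclude via the structure of $\mathbb{F}_q^\times$ (the paper invokes Cauchy's Theorem where you invoke cyclicity, which is the same thing here). Your explicit remark about the quantifier scope --- that the $\Leftarrow$ direction only makes sense existentially in $u$, with the concrete counterexample $u=\mathrm{diag}(1,3)$ over $\mathbb{F}_7$ for $p=3$ --- is a point the paper leaves implicit, and it is a worthwhile clarification.
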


\begin{proof}
    The matrix power $u^p=\big(\begin{smallmatrix}
    a^p & 0\\
    0 & b^p
    \end{smallmatrix}\big)$ is still diagonal, so its centralizer is strictly larger if and only if $u^p$ is a scalar matrix, that is when $a^p\equiv b^p\pmod{q}$. This happens if and only if $(ab\mo)^p\equiv 1\pmod{q}$ that is when $\mathbb{F}_q^\times$ has an element $u$ of order $p$ of the form $u=ab\mo$, or equivalently when $p\mid q-1$, by Cauchy's Theorem.
\end{proof}

\begin{proposition} \label{prop:type3}
    Let $u\in G$ be a matrix of type 3. Then  $\C_G(u^p)\setminus\C_G(u)\neq\emptyset$ if and only if $p=q$.
\end{proposition}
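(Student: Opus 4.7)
The proof rests on a direct computation of $u^p$ using the fact that $u$ is a Jordan block. I would write $u = aI + N$, where $N = \bigl(\begin{smallmatrix} 0 & 1 \\ 0 & 0 \end{smallmatrix}\bigr)$ is nilpotent with $N^2 = 0$. Since $aI$ and $N$ commute, the binomial theorem collapses after two terms, giving
\[
    u^p = (aI + N)^p = a^p I + p\+a^{p-1} N = \begin{pmatrix} a^p & p\+a^{p-1} \\ 0 & a^p \end{pmatrix}.
\]
The whole statement then reduces to analyzing whether $p\+a^{p-1}$ vanishes in $\mathbb{F}_q$.

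If $p = q$, then $p \equiv 0 \pmod{q}$, so $u^p = a^p I$ is a scalar matrix. From the conjugacy-class table, $\C_G(u^p) = G$, which strictly contains the type-3 centralizer $\C_G(u)$ (for instance, $\bigl(\begin{smallmatrix} 1 & 0 \\ 1 & 1 \end{smallmatrix}\bigr)\in \C_G(u^p)\setminus\C_G(u)$). Hence the difference of centralizers is nonempty.

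If $p \neq q$, then $p\+a^{p-1} \neq 0$ in $\mathbb{F}_q$ (using $a\neq 0$), so $u^p$ is again upper-triangular with equal diagonal entries and a nonzero entry in the top-right corner. The key observation, which I would verify by a short direct computation, is that the centralizer formula in the table for type 3 depends only on the fact that the superdiagonal entry is nonzero, not on its specific value: indeed, if $M = \bigl(\begin{smallmatrix} x & y \\ z & w \end{smallmatrix}\bigr)$ commutes with $\bigl(\begin{smallmatrix} a & c \\ 0 & a \end{smallmatrix}\bigr)$ with $c\neq 0$, then comparing entries forces $cz = 0$ and $cx = cw$, hence $z=0$ and $x=w$. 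Thus $\C_G(u^p) = \C_G(u)$ and the difference is empty.

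The argument is essentially computational, and there is no real obstacle. The only mild subtlety worth noting explicitly is that the centralizer of a type-3 matrix is independent of the particular nonzero entry appearing above the diagonal, so that $u^p$ (when $p\neq q$) has exactly the same centralizer as $u$ even though its superdiagonal entry is $p\+a^{p-1}$ rather than $1$.
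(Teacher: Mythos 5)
Your proof is correct and follows essentially the same route as the paper: compute $u^p = a^pI + pa^{p-1}N$ directly, observe that the centralizer of a type-3 matrix depends only on the superdiagonal entry being nonzero, and conclude that the centralizers differ exactly when $u^p$ collapses to a scalar matrix, i.e.\ when $pa^{p-1}\equiv 0\pmod q$, which for primes means $p=q$. Your version simply spells out the "direct computation" that the paper leaves implicit.
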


\begin{proof}
    A direct computation shows that the matrix power $$u^p=\begin{pmatrix}
        a & 1 \\
        0 & a
    \end{pmatrix}^p = 
    \begin{pmatrix}
        a^p & pa^{p-1} \\
        0 & a^p
    \end{pmatrix}$$
    has the same centralizer as the matrix $u$, unless $u^p$ is a scalar matrix. This happens when $pa^{p-1}\equiv 0\pmod{q}$ that is, when $p=q$, being both primes.
\end{proof}

\begin{lemma} \label{lem:type4_xnyn}
    Let $u=\big(\begin{smallmatrix}
    0 & 1\\
    a & b
    \end{smallmatrix}\big)\in G$ be a matrix of type 4. Then for every $n\ge 1$ we have
    \[
        \begin{pmatrix}
        0 & 1 \\
        a & b
        \end{pmatrix}^{n}=        
        \begin{pmatrix}
        x_{n-1} & y_{n-1} \\
        x_{n} & y_{n}
    \end{pmatrix}
    \]
    where
    \[
    \begin{cases}
        x_{0} = 0 \\
        y_{0} = 1
    \end{cases},
    \quad\quad
    \begin{cases}
        x_{n} = ay_{n-1} \\
        y_{n} = x_{n-1} + by_{n-1}.
    \end{cases}\quad n\ge 1.
    \]
\end{lemma}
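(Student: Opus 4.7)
The plan is a straightforward proof by induction on $n$, with the key observation being that multiplying the assumed form on the right by $u$ reproduces the same shape with all indices shifted by one. The recursion is set up precisely so that the bottom row of $u^n$ becomes the top row of $u^{n+1}$, which is what makes the pattern self-propagating.

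For the base case $n=1$, I would simply check that the formula
\[
u^{1} = \begin{pmatrix} 0 & 1 \\ a & b \end{pmatrix} = \begin{pmatrix} x_{0} & y_{0} \\ x_{1} & y_{1} \end{pmatrix}
\]
is consistent with the initial values $x_0=0$, $y_0=1$ and the recursion, which gives $x_1 = a y_0 = a$ and $y_1 = x_0 + b y_0 = b$.

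For the inductive step, assuming the formula holds at $n$, I would compute $u^{n+1} = u^{n} \cdot u$ directly:
\[
\begin{pmatrix} x_{n-1} & y_{n-1} \\ x_{n} & y_{n} \end{pmatrix}
\begin{pmatrix} 0 & 1 \\ a & b \end{pmatrix}
=
\begin{pmatrix} a\+ y_{n-1} & x_{n-1} + b\+ y_{n-1} \\ a\+ y_{n} & x_{n} + b\+ y_{n} \end{pmatrix}
=
\begin{pmatrix} x_{n} & y_{n} \\ x_{n+1} & y_{n+1} \end{pmatrix},
\]
where the last equality is exactly the stated recurrence applied with index $n$ (top row) and index $n+1$ (bottom row). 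This matches the claimed shape of $u^{n+1}$, completing the induction.

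There is no substantial obstacle here: the lemma is purely computational, and the recursion was chosen to fit the matrix multiplication. The only thing to be careful about is aligning the indexing conventions, namely that the top row of $u^n$ carries indices $n-1$ while the bottom row carries indices $n$, so that after right-multiplication by $u$ the indices shift uniformly by one.
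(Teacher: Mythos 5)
Your proof is correct and matches the paper's approach exactly: the paper simply states that the lemma "follows easily by induction," and your right-multiplication by $u$ with the index-shift observation is precisely that induction carried out in full.
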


\begin{proof}
    It follows easily by induction.
\end{proof}

\begin{lemma} \label{lem:type4_yn}
    In the notation of Lemma \ref{lem:type4_xnyn}, assuming $q\neq 2$, we have
    \[
    y_n=\frac{d\mo}{2}\left((d+c)^{n+1}+(d-c)^{n+1}\right)
    \]
    where $c=\frac{b}{2}$ and $d=\frac{\sqrt{b^2+4a}}{2}$.
\end{lemma}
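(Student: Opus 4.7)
The plan is to recognize that $(y_n)$ satisfies a classical second-order linear recurrence and solve it in closed form by the standard method.

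First, I would combine the two halves of the recursion in Lemma~\ref{lem:type4_xnyn}: for $n\ge 2$, substituting $x_{n-1}=ay_{n-2}$ into $y_n=x_{n-1}+by_{n-1}$ yields
\[
y_n \;=\; b\+y_{n-1} + a\+y_{n-2}, \qquad n\ge 2,
\]
with initial values $y_0=1$ and $y_1=x_0+by_0=b=2c$. The characteristic polynomial of this recurrence is exactly $t^2-bt-a$, i.e.\ the polynomial assumed irreducible in the type~4 definition of $u$, and its roots over the quadratic extension $\mathbb F_{q^2}$ are precisely $c\pm d=(b\pm\sqrt{b^2+4a})/2$. The hypothesis $q\neq 2$ guarantees that division by $2$ is well defined in $\mathbb F_q$, so that $c$ and $d$ make sense; irreducibility forces the discriminant $b^2+4a$ to be a non-square in $\mathbb F_q$ (and in particular non-zero), so $d\in\mathbb F_{q^2}\setminus\{0\}$ and the two roots $c\pm d$ are distinct.

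The next step is to write the general solution of the recurrence as $y_n = A(c+d)^n + B(c-d)^n$ for unknown $A,B\in\mathbb F_{q^2}$, and to determine $A,B$ from the two initial conditions $y_0=1$, $y_1=2c$. This produces a $2\times 2$ Vandermonde system with determinant $\pm 2d$, invertible thanks to $q\neq 2$ and $d\neq 0$; solving it and collecting terms yields the claimed closed form.

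The only real obstacle is algebraic bookkeeping: coaxing the resulting expression into the precise symmetric shape of the statement (with $d\pm c$ raised to the power $n+1$ and the prefactor $d^{-1}/2$). A cleaner alternative I would likely favor in the actual write-up is a direct induction on $n$: verify the two base cases $n=0,1$ directly from $y_0=1$ and $y_1=b$, and for the inductive step use the defining identity $(c\pm d)^{n+1}=b(c\pm d)^n+a(c\pm d)^{n-1}$ (which is just the fact that $c\pm d$ are roots of $t^2-bt-a$) together with the recurrence $y_n=by_{n-1}+ay_{n-2}$. This avoids the Vandermonde computation entirely and keeps the sign conventions transparent.
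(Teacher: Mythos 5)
Your proposal is essentially the paper's own argument: the paper diagonalizes the companion matrix $\big(\begin{smallmatrix} 0 & a \\ 1 & b\end{smallmatrix}\big)$ over $\mathbb{F}_{q^2}$ and fits the initial conditions $y_0=1$, $y_1=b$, which is exactly your characteristic-root computation for the recurrence $y_n=by_{n-1}+ay_{n-2}$ with roots $c\pm d$, so the method (and the role of $q\neq 2$ and of irreducibility in giving $d\neq 0$) matches. One caution before you write it up: carrying out either computation honestly yields $y_n=\frac{1}{2d}\left((c+d)^{n+1}-(c-d)^{n+1}\right)$, which agrees with the displayed formula only for even $n$ (the case $n=p-1$ with $p$ odd that is actually used later); in particular your proposed base case $n=1$ does not verify against the statement as printed, since $\frac{1}{2d}\left((d+c)^{2}+(d-c)^{2}\right)=(c^2+d^2)/d\neq b$ in general, so you should record the second term with a factor $(-1)^n$ (or restrict to even $n$) rather than exactly as stated.
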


\begin{proof}
    By Lemma \ref{lem:type4_xnyn} we have for every $n\in\mathbb{N}$
    \[
    \begin{pmatrix}
    x_n \\
    y_n
    \end{pmatrix}
    =
    \begin{pmatrix}
    0 & a \\
    1 & b
    \end{pmatrix}^n
    \begin{pmatrix}
    0 \\
    1
    \end{pmatrix}.
    \]
    Let $A=
    \big(\begin{smallmatrix}
    0 & a \\
    1 & b
    \end{smallmatrix}\big)$.    
    Compute the matrix power $A^n$ using the diagonalization technique. Let $\lambda_1=c+d,\lambda_2=c-d\in\mathbb{F}_{q^2}$ be the eigenvalues of $A$, and let $U\in\GL(2,q^2)$ be the matrix such that $U\mo AU=
    \big(\begin{smallmatrix}
    \lambda_1 & 0\\
    0 & \lambda_2
    \end{smallmatrix}\big)$. Hence $A^n=
    U\big(\begin{smallmatrix}
    \lambda_1^n & 0\\
    0 & \lambda_2^n
    \end{smallmatrix}\big)U\mo$ and 
    \[
    \begin{pmatrix}
    x_n \\
    y_n
    \end{pmatrix}
    =
    \begin{pmatrix}
    k & l \\
    m & r
    \end{pmatrix}
    \begin{pmatrix}
    \lambda_1^n \\
    \lambda_2^n
    \end{pmatrix}
    \]
    for some $k,l,m,r\in\mathbb{F}_{q^2}$. Using the known conditions $x_0=0,y_0=1,x_1=a,y_1=b$ we obtain 
    \[
    k=\frac{a}{\lambda_1-\lambda_2}=\frac{a}{2d},\quad\quad
    l=\frac{-a}{\lambda_1-\lambda_2}=\frac{-a}{2d},\quad\quad
    m=\frac{b-\lambda_2}{\lambda_1-\lambda_2}=\frac{d+c}{2d},\quad\quad
    r=\frac{\lambda_1-b}{\lambda_1-\lambda_2}=\frac{d-c}{2d}.
    \]
    Now compute the powers $\lambda_1^n$ and $\lambda_2^n$ using the Binomial Theorem:
    \begin{align*}
        \lambda_1^n &=(c+d)^n=\sum_{k=0}^{\left\lfloor\frac{n}{2}\right\rfloor}\binom{n}{2k}d^{2k}c^{n-2k} + \sum_{k=0}^{\left\lfloor\frac{n}{2}\right\rfloor}\binom{n}{2k+1}d^{2k+1}c^{n-2k-1} \\
        \lambda_2^n &=(c-d)^n=\sum_{k=0}^{\left\lfloor\frac{n}{2}\right\rfloor}\binom{n}{2k}d^{2k}c^{n-2k} - \sum_{k=0}^{\left\lfloor\frac{n}{2}\right\rfloor}\binom{n}{2k+1}d^{2k+1}c^{n-2k-1}
    \end{align*}
    Therefore
    \[
        y_n=m\lambda_1^n+r\lambda_2^n =\ \dots\ = d\mo\sum_{\substack{i=0 \\ i\, odd}}^{n+1}\binom{n+1}{i}d^ic^{(n+1)-i} = \frac{d\mo}{2}\left((d+c)^{n+1}+(d-c)^{n+1}\right).
    \]
\end{proof}

\begin{lemma} \label{lem:type4_elem_ord_p}
    The condition $p\mid q+1$ holds if and only if there are $p-1$ elements $u$ of multiplicative order $p$ in $\mathbb{F}_{q^2}$, all belonging to $\mathbb{F}_{q^2}\setminus\mathbb{F}_{q}$, and each of those can be expressed as
    \[
        u=\frac{c+d}{c-d}
    \]
    where $c\in\mathbb{F}_{q}$ and $d\in\mathbb{F}_{q^2}\setminus\mathbb{F}_{q}$.
\end{lemma}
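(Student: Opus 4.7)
The plan is to exploit the cyclicity of $\mathbb{F}_{q^2}^\times$ (of order $(q-1)(q+1)$) together with the Möbius transformation $z\mapsto\frac{z-1}{z+1}$, which gives the explicit bijection between the parameter $d/c$ and the value $u=\frac{c+d}{c-d}$.

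For the forward direction, I would begin by applying Cauchy's theorem inside the cyclic group $\mathbb{F}_{q^2}^\times$: from $p\mid q+1$ we get $p\mid q^2-1$, so there is a unique subgroup of order $p$, yielding exactly $p-1$ elements of order $p$. To see that none of them lies in $\mathbb{F}_q^\times$, I would argue that otherwise $p\mid q-1$ as well, forcing $p\mid 2$; the surviving case $p=2$ is then ruled out by the statement itself, since the unique order-$2$ element is $-1\in\mathbb{F}_q^\times$ (recall $q\neq 2$ is the standing hypothesis of Lemma~\ref{lem:type4_yn}, so $q$ is odd). Hence $p$ may be assumed odd, and all $p-1$ order-$p$ elements sit in $\mathbb{F}_{q^2}\setminus\mathbb{F}_q$. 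Next, for each such $u$, solving $u=\frac{c+d}{c-d}$ in $d$ gives $d=c\cdot\frac{u-1}{u+1}$; this is well-defined because $u\neq -1$ has odd order. Fixing any $c\in\mathbb{F}_q^\times$ produces a candidate $d$, and it remains to check $d\notin\mathbb{F}_q$, equivalently $\frac{u-1}{u+1}\notin\mathbb{F}_q$. The short verification: if $\frac{u-1}{u+1}=\alpha\in\mathbb{F}_q$ with $\alpha\neq 1$, inverting gives $u=\frac{1+\alpha}{1-\alpha}\in\mathbb{F}_q$, contradicting $u\notin\mathbb{F}_q$; the remaining case $\alpha=1$ forces $2=0$, impossible in odd characteristic.

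For the reverse direction, the hypothesis provides at least one element of order $p$ in $\mathbb{F}_{q^2}^\times$, hence $p\mid q^2-1=(q-1)(q+1)$; the stipulation that all such elements sit in $\mathbb{F}_{q^2}\setminus\mathbb{F}_q$ excludes $p\mid q-1$, forcing $p\mid q+1$. The expressibility in the form $\frac{c+d}{c-d}$ is not needed in this direction.

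The only genuine obstacle is bookkeeping the small-prime edge cases $p=2$ and $q=2$: $q=2$ is already excluded by Lemma~\ref{lem:type4_yn}'s hypothesis, while $p=2$ is implicitly excluded by the statement, since it asks for order-$p$ elements strictly outside $\mathbb{F}_q$. With these set aside, the proof reduces to the cyclicity of $\mathbb{F}_{q^2}^\times$ and a one-line Möbius computation.
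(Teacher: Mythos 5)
Your proposal is correct and follows essentially the same route as the paper: cyclicity of $\mathbb{F}_{q^2}^\times$ gives the $p-1$ elements of order $p$, the divisibility argument $p\mid\gcd(q-1,q+1)=2$ keeps them out of $\mathbb{F}_q$, and the representation $u=\frac{c+d}{c-d}$ is obtained by inverting the Möbius map $x\mapsto\frac{c+x}{c-x}$ (the paper phrases this as bijectivity of that map rather than writing the inverse explicitly, but it is the same computation). Your explicit bookkeeping of the edge cases $p=2$ and $q=2$ is, if anything, slightly more careful than the paper's, which dismisses $p=2$ in passing.
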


\begin{proof}
    Assume that $p\mid q+1$, thus $p\mid q^2-1=|\mathbb{F}_{q^2}|$. Under this assumption, it is known that in $\mathbb{F}_{q^2}$ there are exactly $p-1>0$ elements of order $p$. Assume, by contradiction, that one of those was in $\mathbb{F}_{q}$. Then it would generate a multiplicative cyclic subgroup of $\mathbb{F}_{q}^\times$ of order $p$ containing all such elements, and implying that $p\mid q-1$ which is impossible, being $p\neq 2$. Then every element of order $p$ must belong to $\mathbb{F}_{q^2}\setminus\mathbb{F}_{q}$.    Consider now, for every $c\in\mathbb{F}_{q^2}^\times$ the map $\varphi_c\colon\mathbb{F}_{q^2}\to\mathbb{F}_{q^2}$ defined by
    \[
    \varphi_c(x)=
    \begin{cases}
        \displaystyle{\frac{c+x}{c-x}}
        \quad x\neq c \\
        -1\quad\quad x=c
    \end{cases}.
    \]
    It is easy to see that $\varphi_c$ is bijective. Since $p>2$, the element $-1$ does not have order $p$. Therefore all the $p-1$ elements of order $p$ are contained in $\varphi_c\left(\mathbb{F}_{q^2}\setminus\{c\}\right)$. Note that, if for an element $u\in\mathbb{F}_{q^2}\setminus\mathbb{F}_{q}$ of order $p$ we have $u=\varphi_c(x)$, for some $c\in\mathbb{F}_{q}$, then, necessarily, we need to have $x\in\mathbb{F}_{q^2}\setminus\mathbb{F}_{q}$, otherwise, we would get that $u\in\mathbb{F}_{q}$.
    
    Conversely, the existence of elements of order $p$ in $u\in\mathbb{F}_{q^2}\setminus\mathbb{F}_{q}$ implies that $p\mid q^2-1=|\mathbb{F}_{q^2}^\times|$. If we had $p\mid q-1=|\mathbb{F}_{q}^\times|$ then by Cauchy's theorem, $\mathbb{F}_{q}^\times$ would contain one (hence, all) of them, which is against the assumptions. It follows that $p\mid q+1$.
\end{proof}

\begin{lemma} \label{lem:type4_ypm1zero}
    In the notation of Lemma \ref{lem:type4_xnyn} and Lemma \ref{lem:type4_yn}, we have $y_{p-1}\equiv 0\pmod{q}$ if and only if $p\mid q+1$.
\end{lemma}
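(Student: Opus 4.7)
The plan is to translate the polynomial condition $y_{p-1}\equiv 0\pmod q$ from Lemma~\ref{lem:type4_yn} into a purely multiplicative condition on a single element of $\mathbb{F}_{q^2}^\times$, whose order is then controlled via the Frobenius. Since $d\neq 0$ and $d-c\neq 0$ (because $d\in\mathbb{F}_{q^2}\setminus\mathbb{F}_q$ and $c\in\mathbb{F}_q$), the formula in Lemma~\ref{lem:type4_yn} rewrites the vanishing of $y_{p-1}$ as $w^p=-1$, where $w\defeq(d+c)/(d-c)\in\mathbb{F}_{q^2}^\times$. The crucial observation is that $d^2\in\mathbb{F}_q$ together with $d\notin\mathbb{F}_q$ forces $\bar d=-d$ under Frobenius $x\mapsto x^q$, and a direct computation then gives $\bar w=w^{-1}$. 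Therefore $w$ has norm one over $\mathbb{F}_q$, and its order divides $q+1$.

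For the forward direction, assume that $y_{p-1}=0$ holds for some type 4 matrix, so that $w^p=-1$. Then the order of $w$ divides $2p$ but not $p$, hence equals either $2$ or $2p$. Order $2$ yields $w=-1$, which forces $d=0$, a contradiction. So the order of $w$ is exactly $2p$, and since it divides $q+1$, we conclude $p\mid q+1$.

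For the backward direction, assume $p\mid q+1$. By Lemma~\ref{lem:type4_elem_ord_p}, there exists $u_0\in\mathbb{F}_{q^2}\setminus\mathbb{F}_q$ of order $p$, and it has norm one (its order divides $q+1$). I fix any $c\in\mathbb{F}_q^\times$ and set $d\defeq c(u_0-1)/(u_0+1)$. The norm-one condition gives $\bar d=-d$, whence $d^2\in\mathbb{F}_q$ and $d\notin\mathbb{F}_q$ (using $u_0\neq 1$ and $q$ odd). Setting $b\defeq 2c$ and $a\defeq d^2-c^2$ yields a matrix of type 4, since its discriminant is $b^2+4a=4d^2$, a non-square in $\mathbb{F}_q$; a direct check gives $w=-u_0$, so $w^p=-u_0^p=-1$ and $y_{p-1}=0$. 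The main obstacle lies here: starting from an abstract element of order $p$ in $\mathbb{F}_{q^2}\setminus\mathbb{F}_q$, one must realize it as coming from the entries of an actual type 4 matrix, and all the bookkeeping pivots on the fact that the $d$ arising from a type 4 matrix must satisfy $d^2\in\mathbb{F}_q$, which is precisely what the norm-one property of $u_0$ provides.
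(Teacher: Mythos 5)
Your argument for odd $q$ is correct and, in the forward direction, is genuinely cleaner than the paper's. The key point that $d^2\in\mathbb{F}_q$ with $d\notin\mathbb{F}_q$ forces $\bar d=-d$ under Frobenius, hence $\bar w=w^{-1}$ and $\mathrm{ord}(w)\mid q+1$, lets you read off $w^p=-1\Rightarrow \mathrm{ord}(w)=2p\mid q+1$ directly from the norm-one subgroup, where the paper instead routes everything through the counting statement of Lemma \ref{lem:type4_elem_ord_p}. For the converse you still use Lemma \ref{lem:type4_elem_ord_p} to produce an element of order $p$ in $\mathbb{F}_{q^2}\setminus\mathbb{F}_q$, but you make explicit the step the paper buries inside that lemma, namely realizing it via the $c,d$ of an actual type 4 matrix; your construction $d=c(u_0-1)/(u_0+1)$, $b=2c$, $a=d^2-c^2$ checks out ($b^2+4a=4d^2$ is a non-square, $w=-u_0$, so $w^p=-1$). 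Note that, exactly like the paper's own proof (which at the corresponding point switches to ``for \emph{some} $u$ of the form $(c+d)/(c-d)$''), your converse establishes the existence of a type 4 matrix with $y_{p-1}=0$ rather than the vanishing for an arbitrary fixed one; the universally quantified reading of the lemma is in fact false (e.g.\ $q=5$, $p=3$: an eigenvalue ratio of order $6$ in the norm-one subgroup gives a type 4 matrix whose cube is not scalar), so you are proving precisely what Proposition \ref{prop:type4} and Theorem \ref{thm:GL_characterization_coloring} actually consume.

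The one genuine omission is the case $q=2$. Lemma \ref{lem:type4_yn} is only stated for $q\neq 2$, and your whole argument lives in odd characteristic: you divide by $2$, you rule out $\mathrm{ord}(w)\mid p$ using $-1\neq 1$, and you invoke ``$q$ odd'' explicitly in the converse. The paper handles $q=2$ separately by noting that the only irreducible quadratic forces $a=b=1$ and then showing by induction that $y_n\equiv 0\pmod 2$ iff $n\equiv 2\pmod 3$, which gives the claim exactly for $p=3=q+1$. This case is not vacuous downstream --- for $q=2$ and $p=3$ only the type 4 class witnesses colorability of $\K(m,3)$ by $\conj(\GL(2,2))$ --- so you need to supply it.
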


\begin{proof}
    Analyze first when $q=2$. The only case in which the polynomial $x^2-bx-a$ is irreducible in $\mathbb{F}_2$ is when $a=b=1$, and its splitting field is $\mathbb{F}_2[x]/\langle x^2+x+1\rangle\cong\mathbb{F}_4$. Assuming $a=b=1$, a simple induction argument shows that $y_n\equiv 0\pmod{2}$ if and only if $n\equiv 2\pmod{3}$, thus, $y_{p-1}\equiv{0}$ if and only if $p\equiv 0\pmod{3}$, that is $p=3$. Therefore, the claim holds for $q=2$.

    Assume now $q\neq 2$, then Lemma \ref{lem:type4_yn} applies and we have 
    \begin{align*}
        y_{p-1}\equiv 0\pmod{q} \iff \frac{d\mo}{2}\left((d+c)^{n+1}+(d-c)^{n+1}\right)\equiv 0\pmod{q} \iff \left((c+d)(c-d)\mo\right)^p\equiv 1\pmod{q}
    \end{align*}
    which holds if and only if the equation $u^p=1$ has solution in $\mathbb{F}_{q^2}$ for some $u$ of the form $u=\frac{c+d}{c-d}$ with $c\in\mathbb{F}_{q}$ and $d\in\mathbb{F}_{q^2}\setminus\mathbb{F}_{q}$. The conclusion now follows from Lemma \ref{lem:type4_elem_ord_p}.
\end{proof}

\begin{proposition} \label{prop:type4}
    Let $u\in G$ be a matrix of type 4. Then  $\C_G(u^p)\setminus\C_G(u)\neq\emptyset$ if and only if $p\mid q+1$.
\end{proposition}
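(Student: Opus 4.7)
The plan is to reduce the question ``when is $\C_G(u^p) \setminus \C_G(u) \neq \emptyset$?'' to the question ``when is $u^p$ a scalar matrix?'' and then to invoke Lemma \ref{lem:type4_ypm1zero}. The key structural observation I would use is that a type 4 element $u$ has its centralizer equal to $\mathbb{F}_q[u]^\times$, a non-split maximal torus of $G$, and this centralizer is maximal among centralizers of non-central elements. So the only way for $\C_G(u^p)$ to be strictly larger than $\C_G(u)$ is for $u^p$ to be central, i.e., a nonzero scalar matrix (type 1).

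To justify this reduction, I would look at eigenvalues in $\mathbb{F}_{q^2}$. Since $x^2-bx-a$ is irreducible over $\mathbb{F}_q$, the eigenvalues of $u$ are a Frobenius-conjugate pair $\lambda, \lambda^q \in \mathbb{F}_{q^2}\setminus\mathbb{F}_q$, so the eigenvalues of $u^p$ are $\lambda^p, \lambda^{pq}$, again a Frobenius-conjugate pair. Either both lie in $\mathbb{F}_{q^2}\setminus\mathbb{F}_q$ (so $u^p$ is again of type 4 and, being a polynomial in $u$, sits inside $\mathbb{F}_q[u]$ and therefore has exactly the same centralizer as $u$), or both lie in $\mathbb{F}_q$, in which case they must coincide (they are Galois conjugate) and $u^p$ is a scalar matrix. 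Types 2 and 3 cannot occur because type 2 would require distinct eigenvalues in $\mathbb{F}_q$ and type 3 would require $u^p$ to be non-diagonalizable, but $u^p$ is automatically diagonalizable over $\mathbb{F}_{q^2}$.

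Next I would use Lemma \ref{lem:type4_xnyn} to write
\[
u^p = \begin{pmatrix} x_{p-1} & y_{p-1} \\ x_{p} & y_{p} \end{pmatrix},
\]
and observe that $u^p$ is a (nonzero) scalar iff $y_{p-1} \equiv 0 \pmod q$: this forces $x_p = a y_{p-1} = 0$ and $y_p = x_{p-1} + b y_{p-1} = x_{p-1}$ via the recursion, and the scalar value $x_{p-1}$ is automatically nonzero since $u^p \in G$ is invertible. Combined with the reduction from the previous paragraph, we obtain
\[
\C_G(u^p) \setminus \C_G(u) \neq \emptyset \iff y_{p-1} \equiv 0 \pmod q.
\]

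Finally, Lemma \ref{lem:type4_ypm1zero} gives the arithmetic equivalence $y_{p-1} \equiv 0 \pmod q \iff p \mid q+1$, finishing the proof. The step I anticipate as the main subtlety is the structural claim that a type-4 centralizer can only strictly grow by jumping to all of $G$; one has to rule out types 2 and 3 cleanly using the Galois-conjugacy of the eigenvalues, and recognize that since $u^p \in \mathbb{F}_q[u]$ a type-4 image preserves the centralizer rather than producing a different torus.
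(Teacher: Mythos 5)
Your proposal is correct and follows essentially the same route as the paper: the paper's proof likewise reduces the question to whether $u^p$ is a scalar matrix (asserting via ``a direct computation'' that otherwise the centralizer is unchanged), identifies this with $y_{p-1}\equiv 0\pmod q$, and concludes by Lemma \ref{lem:type4_ypm1zero}. Your eigenvalue/Galois-conjugacy argument simply fills in the details of that direct computation, ruling out types 2 and 3 and showing a non-scalar $u^p\in\mathbb{F}_q[u]$ has the same centralizer as $u$.
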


\begin{proof}
    A direct computation shows that the matrix power $u^p$ has the same centralizer as the matrix $u$ unless $u^p$ is a scalar matrix, and this happens if and only if $y_{p-1}\equiv 0\pmod{q}$. The conclusion now follows from Lemma \ref{lem:type4_ypm1zero}.
\end{proof}

We summarize Propositions \ref{prop:type1}, \ref{prop:type2}, \ref{prop:type3}, \ref{prop:type4} in the following table.

\begin{center}
    \begin{tabular}{ c|l|c }
    Type & $u^p$ & $\C_{\GL(2,q)}(u^p)\setminus\C_{\GL(2,q)}(u)\neq\emptyset$ \\
    \hline
    Type 1 &
    $\begin{pmatrix}
        a^p & 0 \\
        0 & a^p
    \end{pmatrix}\quad a\neq 0$ & 
    Never \Tstrut \Bstrut\\[15pt]
    
    Type 2 & 
    $\begin{pmatrix}
        a^p & 0 \\
        0 & b^p
    \end{pmatrix}\quad 0<a<b$ & 
    $p\mid q-1$ \\[15pt]

    Type 3 & 
    $\begin{pmatrix}
        a^p & pa^{p-1} \\
        0 & a^p
    \end{pmatrix}\quad a\neq 0$ & 
    $p=q$ \\[15pt] 

    Type 4 & 
    $\begin{pmatrix}
        x_{p-1} & y_{p-1} \\
        ay_{p-1} & x_{p-1}+by_{p-1}
    \end{pmatrix}\quad x^2-bx-a\mbox{ irreducible}$ & 
    $p\mid q+1$\\  
    \end{tabular}
\end{center}

In conclusion, by combining all the information acquired from the previous results with the fact that conjugacy classes form a partition of $G$, we obtain the main result of this subsection.

\begin{theorem}[$\conj(\GL(2,q))$-coloring of Torus Knots] \label{thm:GL_characterization_coloring}
    Let $m,p\in\N$ be such that $m\ge 2$ and $p$ is prime with $p\nmid m$. The torus knot $\K(m,p)$ is $\conj(\GL(2,q))$-colorable if and only if $p\mid q(q+1)(q-1)$.
\end{theorem}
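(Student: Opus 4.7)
The plan is to apply Theorem \ref{thm:char_kmp}, which asserts that $\K(m,p)$ is $\conj(\GL(2,q))$-colorable if and only if there exists $u\in\GL(2,q)$ with $\C_{\GL(2,q)}(u^p)\setminus\C_{\GL(2,q)}(u)\neq\emptyset$. This condition is preserved under conjugation, since $\C_G(gug\mo)=g\+\C_G(u)\+g\mo$ and $(gug\mo)^p=gu^pg\mo$, so the existence of such a $u$ in any conjugacy class of $\GL(2,q)$ can be tested on any chosen representative. Hence it suffices to verify the centralizer condition on a complete set of representatives of the conjugacy classes of $\GL(2,q)$.

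The four types of conjugacy class representatives are tabulated in the paragraph preceding Proposition \ref{prop:type1}, and Propositions \ref{prop:type1}--\ref{prop:type4} have already settled, for each type, exactly when the condition $\C_{\GL(2,q)}(u^p)\setminus\C_{\GL(2,q)}(u)\neq\emptyset$ holds: Type 1 never satisfies it, Type 2 satisfies it iff $p\mid q-1$, Type 3 iff $p=q$, and Type 4 iff $p\mid q+1$. Combining these four cases via the fact that conjugacy classes partition $\GL(2,q)$, we conclude that $\K(m,p)$ is $\conj(\GL(2,q))$-colorable if and only if at least one of the three conditions $p\mid q-1$, $\ p=q,\ p\mid q+1$ holds.

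The last step is an arithmetic reformulation: I would note that, since $p$ is prime, Euclid's lemma gives $p\mid q(q-1)(q+1)$ if and only if $p$ divides one of the three factors; and since $q$ is also prime, $p\mid q$ is equivalent to $p=q$. This matches the disjunction produced above and yields the compact form $p\mid q(q+1)(q-1)$ of the statement. All of the genuinely technical work has already been done in Propositions \ref{prop:type1}--\ref{prop:type4} (the heart of which is Lemma \ref{lem:type4_ypm1zero}, dealing with Type 4); the theorem itself is then just a bookkeeping assembly via the partition into conjugacy classes, so I do not expect any serious obstacle beyond the careful case analysis already carried out.
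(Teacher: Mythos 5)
Your proposal is correct and follows exactly the route the paper takes: invoke Theorem \ref{thm:char_kmp}, reduce to conjugacy class representatives (the conjugation-invariance of the centralizer condition, which you make explicit, is implicit in the paper's phrase ``conjugacy classes form a partition of $G$''), and assemble Propositions \ref{prop:type1}--\ref{prop:type4} into the disjunction $p\mid q-1$, $p=q$, or $p\mid q+1$, which for $p$ and $q$ prime is equivalent to $p\mid q(q+1)(q-1)$. No gaps; your write-up is in fact slightly more explicit than the paper's one-sentence conclusion.
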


\subsection[Special linear group]{Special linear group}

We proceed in a similar manner as in the case of $\GL(2,q)$, distinguishing among representatives of conjugacy classes and their centralizers. Once again, throughout this subsection, with $p$ and $q$ we denote two prime numbers, and the special linear group $\SL(2,q)$ is denoted by $G$. Our objective is to obtain a numerical characterization for the colorability of $\K(m,p)$ solely in terms of $p$ and $q$.

\begin{center}
    \begin{tabular}{ c|l|l }
    Type & $u$ & $\C_{\SL(2,q)}(u)$ \\
    \hline
    Type 1 &
    $\begin{pmatrix}
        a & 0 \\
        0 & a
    \end{pmatrix}\quad a^2=1$ & 
    $\SL(2,q)$ \Tstrut \Bstrut\\[15pt]
    
    Type 2 & 
    $\begin{pmatrix}
        a & 0 \\
        0 & a\mo
    \end{pmatrix}\quad a\neq 0$ & 
    $\left\{\begin{pmatrix}
        u & 0 \\
        0 & u\mo
    \end{pmatrix}\in\SL(2,q)\colon u\neq 0\right\}$ \\[15pt]

    Type 3 & 
    $\begin{pmatrix}
        a & b \\
        0 & a
    \end{pmatrix}\quad a^2=1, b=1\mbox{ or } b\mbox{ non-square}$ & 
    $\left\{\begin{pmatrix}
        u & v \\
        0 & u
    \end{pmatrix}\in\SL(2,q)\colon u^2=1\right\}$ \\[15pt] 

    Type 4 & 
    $\begin{pmatrix}
        0 & 1 \\
        -1 & a
    \end{pmatrix}\quad a=r+r^q, r\in\mathbb{F}_{q^2}\setminus\mathbb{F}_{q}, r^{q+1}=1$ & 
    $\left\{\begin{pmatrix}
        u & v \\
        -v & u+av
    \end{pmatrix}\in\SL(2,q)\colon u(u+av)+v^2=1\right\}$ \\  
    \end{tabular}
\end{center}

\begin{remark}
    The subsequent propositions can be proven using the same techniques presented for the case of $\GL(2,q)$ above. In many cases, the computations turn out to be exactly the same; however, in some instances, we encounter certain refinements due to the fewer parameters involved. The proofs are substantially identical to those presented in Section \ref{ssc:GL}. Therefore, in the following discussion, we simply state the results for the case where $G=\SL(2,q)$.
\end{remark}

\begin{proposition}
    Let $u\in G$ be a matrix of type 1. Then $\C_G(u)=\C_G(u^p)=G$.
\end{proposition}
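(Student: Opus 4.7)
The plan is essentially to mimic the proof of Proposition \ref{prop:type1}, observing that the argument is entirely insensitive to whether we work in $\GL(2,q)$ or $\SL(2,q)$. A type 1 element is a scalar matrix $u=aI$ with $a^2=1$, and any scalar matrix commutes with every matrix in $\GL(2,q)$, hence in particular with every element of $G=\SL(2,q)$. This immediately yields $\C_G(u)=G$.

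For the second equality, I would compute $u^p=(aI)^p=a^p I$, which is again a scalar matrix (with $a^p\in\{\pm 1\}$ since $a^2=1$, so in particular $u^p\in G$). The same reasoning as above then gives $\C_G(u^p)=G$, closing the argument.

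There is no real obstacle here; the statement is a routine transcription of Proposition \ref{prop:type1} to the $\SL(2,q)$ setting, justified by the fact that central elements of the ambient $\GL(2,q)$ that happen to lie in $\SL(2,q)$ are still central in $\SL(2,q)$. Concretely, the proof reduces to the single sentence: \emph{scalar matrices are central, and powers of scalar matrices are scalar.}
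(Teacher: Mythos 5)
Your proof is correct and follows essentially the same route as the paper, which for the $\SL(2,q)$ case simply defers to the one-line argument given for Proposition \ref{prop:type1}: a type 1 element is scalar, hence central, and its $p$-th power is again scalar, so both centralizers are all of $G$. Nothing further is needed.
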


\begin{proposition} \label{prop:sl_type2}
    Let $u\in G$ be a matrix of type 2. Then  $\C_G(u^p)\setminus\C_G(u)\neq\emptyset$ if and only if $p\mid q-1$.
\end{proposition}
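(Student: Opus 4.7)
The plan is to mimic the proof of the $\GL$ analogue (Proposition~\ref{prop:type2}) almost verbatim, since the only structural change is that the two diagonal entries of a type-2 matrix are now linked by the determinant-one constraint. First I would let $u = \mathrm{diag}(a, a\mo)$ with $a^2 \neq 1$ (so that $u$ is genuinely type 2 and not type 1), compute $u^p = \mathrm{diag}(a^p, a^{-p})$, and observe from the classification table that $u^p$ lies in $\C_G(u)$ and stays diagonal, so $\C_G(u^p) \supseteq \C_G(u)$ is automatic. Scanning the types of centralizers, the inclusion can be strict only when $u^p$ jumps into type 1, i.e., becomes a scalar matrix. Since in $\SL(2,q)$ the only scalar matrices are $\pm I$, this collapses to the single condition $a^{2p} \equiv 1 \pmod{q}$ together with $a^2 \not\equiv 1$.

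The remaining task is purely arithmetic: decide when there is $a \in \mathbb{F}_q^\times$ satisfying these two conditions simultaneously. The multiplicative order of such an $a$ must divide $2p$ but not $2$, so it lies in $\{p, 2p\}$. By Cauchy's theorem applied to the cyclic group $\mathbb{F}_q^\times$ of order $q-1$, an element of order $p$ exists iff $p \mid q-1$. For the "if" direction this directly supplies a workable $a$ of order $p$ (then $u^p = I$ is scalar), while for the "only if" direction one notes that an order-$2p$ element squared has order $p$, so the existence of $a$ in either subcase forces $p \mid q-1$.

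I do not foresee any real obstacle: the diagonal form of $u$ makes the centralizer analysis immediate, and the final translation to a divisibility condition is routine. The only mild difference from the $\GL$ case is the appearance of the factor $2$ in the exponent $a^{2p}$ (arising from $b = a\mo$ rather than a free $b$), but this is absorbed by the observation that the condition $p \mid q - 1$ simultaneously characterizes both order-$p$ and order-$2p$ elements in $\mathbb{F}_q^\times$.
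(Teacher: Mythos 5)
Your argument is the intended one: the paper gives no separate proof for the $\SL$ case, stating only that the computations are ``substantially identical'' to Proposition~\ref{prop:type2}, and your reduction --- $\C_G(u^p)\setminus\C_G(u)\neq\emptyset$ for diagonal $u$ forces $u^p$ to be scalar, hence $a^{2p}=1$ with $a^2\neq 1$, since the only scalars in $\SL(2,q)$ are $\pm I$ --- is exactly that adaptation. For odd $p$ everything you write is correct: an element $a$ of order $p$ exists iff $p\mid q-1$, it automatically satisfies $a^2\neq 1$, and conversely an element of order $p$ or $2p$ forces $p\mid q-1$.

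The gap is at $p=2$, which the hypotheses of Theorem~\ref{thm:char_kmp} do allow (take $m$ odd). There your ``if'' direction breaks: the unique element of order $2$ in $\mathbb{F}_q^\times$ is $a=-1$, which gives $u=-I$, a type-1 matrix rather than type 2; and the surviving option, an element of order $2p=4$, exists iff $4\mid q-1$, not iff $2\mid q-1$. So your closing claim that $p\mid q-1$ ``simultaneously characterizes both order-$p$ and order-$2p$ elements'' is false for $p=2$, and indeed for $q\equiv 3\pmod 4$ no type-2 matrix of $\SL(2,q)$ has $u^2$ scalar even though $2\mid q-1$. This is a defect of the statement itself (inherited from the paper's ``identical proof'' shortcut) rather than only of your write-up --- the final characterization theorem survives because a type-4 element supplies the required centralizer jump when $4\mid q+1$ --- but since your proof purports to cover all primes $p$, you should either restrict to odd $p$ explicitly or note that the correct type-2 condition for $p=2$ is $4\mid q-1$.
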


\begin{proposition} \label{prop:sl_type3}
    Let $u\in G$ be a matrix of type 3. Then  $\C_G(u^p)\setminus\C_G(u)\neq\emptyset$ if and only if $p=q$.
\end{proposition}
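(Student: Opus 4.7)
The plan is to mirror the argument of Proposition \ref{prop:type3} for $\GL(2,q)$, now adapting it to the stricter type 3 shape in $\SL(2,q)$, where $a^2 = 1$ and $b \ne 0$ (with $b = 1$ or $b$ a non-square in $\mathbb{F}_q^\times$).

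First I would compute the matrix power by a routine induction:
\[
u^p = \begin{pmatrix} a & b \\ 0 & a \end{pmatrix}^p = \begin{pmatrix} a^p & p a^{p-1} b \\ 0 & a^p \end{pmatrix}.
\]
Since $a^2 = 1$, the diagonal entry $a^p$ is still $\pm 1$ and $\det(u^p) = 1$, so $u^p$ remains an upper triangular matrix in $\SL(2,q)$ with equal diagonal entries. From the table of conjugacy class representatives, any such matrix is either of type 3 or scalar (type 1). Moreover, the table shows that the centralizer of a type 3 element depends only on the upper-triangular shape (imposing the diagonal entries to be $\pm 1$) and not on the specific value of $b$; hence whenever $u^p$ is itself of type 3, one has $\C_G(u^p) = \C_G(u)$. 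Consequently, the condition $\C_G(u^p) \setminus \C_G(u) \ne \emptyset$ is equivalent to $u^p$ collapsing to a scalar matrix, whose centralizer is all of $\SL(2,q)$ and strictly contains $\C_G(u)$.

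Finally, $u^p$ is scalar iff its off-diagonal entry $p a^{p-1} b$ vanishes in $\mathbb{F}_q$. Since $a, b \ne 0$ by the type 3 hypothesis, this is equivalent to $p \equiv 0 \pmod{q}$; both $p$ and $q$ being prime, this forces $p = q$. Conversely, if $p = q$ then $u^p = a^p I$ is genuinely scalar, giving the reverse implication. The main point requiring care is the observation that type 3 centralizers are insensitive to $b$, which is what lets us rule out strict containment when $u^p$ stays non-scalar; everything else reduces to the same bookkeeping as in Proposition \ref{prop:type3}.
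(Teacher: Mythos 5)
Your proposal is correct and follows essentially the same route as the paper, which proves the $\SL(2,q)$ type 3 case by the identical computation used for Proposition \ref{prop:type3} in $\GL(2,q)$: compute $u^p=\bigl(\begin{smallmatrix} a^p & pa^{p-1}b \\ 0 & a^p\end{smallmatrix}\bigr)$, observe the centralizer is unchanged unless $u^p$ is scalar, and note that $pa^{p-1}b\equiv 0\pmod q$ forces $p=q$. Your explicit remark that the centralizer of a non-scalar upper-triangular matrix with equal diagonal entries is independent of the off-diagonal entry is exactly the point the paper leaves implicit.
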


\begin{proposition} \label{prop:sl_type4}
    Let $u\in G$ be a matrix of type 4. Then  $\C_G(u^p)\setminus\C_G(u)\neq\emptyset$ if and only if $p\mid q+1$.
\end{proposition}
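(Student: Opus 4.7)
The plan is to mimic the $\GL(2,q)$ argument of Proposition~\ref{prop:type4}, observing that any SL type 4 representative $u = \begin{pmatrix} 0 & 1 \\ -1 & a \end{pmatrix}$ is simultaneously a GL type 4 matrix with parameters $(a_{\mathrm{GL}}, b_{\mathrm{GL}}) = (-1, a)$; its characteristic polynomial is $x^2 - a x + 1$, which is irreducible over $\mathbb{F}_q$ because by hypothesis $r$ and $r^q$ lie in $\mathbb{F}_{q^2}\setminus\mathbb{F}_q$ and are its two distinct roots (they satisfy $r + r^q = a$ and $r \cdot r^q = r^{q+1} = 1$). This correspondence is the bridge that lets Lemma~\ref{lem:type4_xnyn}, Lemma~\ref{lem:type4_yn}, and Lemma~\ref{lem:type4_ypm1zero} apply verbatim to the SL setting.

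Specializing Lemma~\ref{lem:type4_xnyn} I would obtain
\[
    u^p = \begin{pmatrix} x_{p-1} & y_{p-1} \\ -y_{p-1} & x_{p-1} + a y_{p-1} \end{pmatrix}.
\]
From the centralizer column of the SL type 4 table, $\C_G(u)$ is the non-split maximal torus of $G = \SL(2,q)$ of order $q+1$, and it already contains every power of $u$. Since this torus is a maximal abelian subgroup of $\SL(2,q)$ among non-central ones, the only way $\C_G(u^p)$ can be strictly larger than $\C_G(u)$ is for $u^p$ to fall into the center of $G$, i.e., to be scalar; reading off the off-diagonal entries of the displayed expression, this is equivalent to $y_{p-1} \equiv 0 \pmod q$.

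Finally I would invoke Lemma~\ref{lem:type4_ypm1zero}, which characterizes the vanishing $y_{p-1} \equiv 0 \pmod q$ as being exactly $p \mid q+1$, closing the proof. The main (essentially only) subtlety is the centralizer argument of the previous paragraph: one must be sure that, for a non-central $u^p$ lying in the non-split torus, the centralizer inside $\SL(2,q)$ does not accidentally grow beyond that torus. This is guaranteed by the classification of conjugacy classes in $\SL(2,q)$ displayed above, where every non-central matrix has a unique type and only type 1 yields the whole group as centralizer. Everything else is a direct transcription of the $\GL(2,q)$ case.
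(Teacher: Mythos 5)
Your proposal is correct and matches the paper's intent exactly: the paper gives no separate argument for the $\SL(2,q)$ type 4 case, stating only that the proofs are substantially identical to those of Section \ref{ssc:GL}, and your transcription via the substitution $(a,b)\mapsto(-1,a)$ into Lemmas \ref{lem:type4_xnyn}--\ref{lem:type4_ypm1zero} is precisely that argument. Your justification that $\C_G(u^p)$ can only exceed $\C_G(u)$ when $u^p$ is scalar (since a non-central $u^p$ has abelian centralizer containing $u$, forcing $\C_G(u^p)\subseteq\C_G(u)$) is sound and in fact slightly more explicit than the "direct computation" invoked in Proposition \ref{prop:type4}.
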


Combining the previous results, we obtain exactly the same numeric condition as in Theorem \ref{thm:GL_characterization_coloring}. We may therefore strengthen the statement, adjoining the claim related to the special linear group.

\begin{theorem}
    Let $m,p\in\N$ be such that $m\ge 2$ and $p$ is prime with $p\nmid m$. The following are equivalent.
    \begin{enumerate}
            \item The torus knot $\K(m,p)$ is $\conj(\GL(2,q))$-colorable.
            \item The torus knot $\K(m,p)$ is $\conj(\SL(2,q))$-colorable.
            \item $p\mid q(q+1)(q-1)$.
        \end{enumerate}
\end{theorem}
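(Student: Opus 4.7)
The plan is to reduce the claimed three-way equivalence to the $\SL$ analogue of Theorem \ref{thm:GL_characterization_coloring}, since the equivalence $(i)\Leftrightarrow(iii)$ is already that theorem. All the work is therefore in proving $(ii)\Leftrightarrow(iii)$, after which combining both characterizations immediately yields $(i)\Leftrightarrow(ii)$.

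First I would invoke Theorem \ref{thm:char_kmp} for $G=\SL(2,q)$: the knot $\K(m,p)$ is $\conj(\SL(2,q))$-colorable iff there exists some $u\in\SL(2,q)$ with $\C_G(u^p)\setminus\C_G(u)\neq\emptyset$. Since conjugacy classes partition $G$ and centralizers are preserved under conjugation, it suffices to test this condition on one representative per conjugacy class. The table of representatives just above the proposition block gives the four types (scalar, split semisimple, non-semisimple, and non-split semisimple), so a colorability certificate exists iff at least one type admits such a $u$.

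Next I would read off the numerical condition type by type using the propositions just stated: for Type 1 it never holds (Proposition for scalars); for Type 2 it holds iff $p\mid q-1$ (Proposition \ref{prop:sl_type2}); for Type 3 it holds iff $p=q$, i.e.\ $p\mid q$ (Proposition \ref{prop:sl_type3}); for Type 4 it holds iff $p\mid q+1$ (Proposition \ref{prop:sl_type4}). Taking the disjunction of these four conditions, $\K(m,p)$ is $\conj(\SL(2,q))$-colorable iff
\[
p\mid q-1\quad\text{or}\quad p=q\quad\text{or}\quad p\mid q+1,
\]
which, since $p$ is prime, is exactly $p\mid q(q-1)(q+1)$. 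This establishes $(ii)\Leftrightarrow(iii)$.

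Finally, to close the theorem I would just chain the two characterizations: $(i)\Leftrightarrow(iii)$ by Theorem \ref{thm:GL_characterization_coloring}, and $(ii)\Leftrightarrow(iii)$ from the previous paragraph, hence all three are equivalent. The only conceptual subtlety — and the point where I would double-check carefully — is that the $\SL$ conjugacy class analysis really does cover every element of $\SL(2,q)$ (in particular that Type 3 and Type 4 give nontrivial cases in small characteristic such as $q=2$, where the Type 4 representative table with $r^{q+1}=1$ still yields an irreducible quadratic, and Type 3 requires checking both $b=1$ and $b$ non-square to exhibit a representative). Since the paper has already delegated these verifications to Propositions \ref{prop:sl_type2}--\ref{prop:sl_type4}, assembling them is routine and no additional technical lemma is needed.
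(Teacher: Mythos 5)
Your proposal is correct and follows essentially the same route as the paper: the paper likewise obtains $(ii)\Leftrightarrow(iii)$ by combining Theorem \ref{thm:char_kmp} with the four conjugacy-class propositions for $\SL(2,q)$ (never; $p\mid q-1$; $p=q$; $p\mid q+1$), notes the resulting disjunction is the same numeric condition $p\mid q(q+1)(q-1)$ as in Theorem \ref{thm:GL_characterization_coloring}, and then adjoins the $\SL$ claim to the $\GL$ characterization exactly as you do.
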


\subsection[Dihedral group]{Dihedral group}

We now proceed to discuss the dihedral groups. We denote the dihedral group of the $n$-gon as ${\D_n}$, and employ the following presentation:
\[
    \D_n=\left\langle r,s\colon r^n=s^2=1,\ srs=r\mo\right\rangle.
\]
Our objective is to derive a numerical characterization for the colorability of $\K(m,p)$ solely in terms of $p$ and $n$.

\begin{theorem}[$\conj(\D_n)$-coloring of Torus Knots] \label{thm:Dn_characterization_coloring}
    Let $m,p\in\N$ be such that $m\ge 2$ and $p$ is prime with $p\nmid m$. The torus knot $\K(m,p)$ is $\conj(\D_n)$-colorable if and only if $p\mid n$.
\end{theorem}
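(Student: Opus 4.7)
The strategy is to apply Theorem~\ref{thm:char_kmp}, which translates $\conj(\D_n)$-colorability of $\K(m,p)$ into the existence of an element $u\in\D_n$ satisfying $\C_{\D_n}(u^p)\setminus\C_{\D_n}(u)\neq\emptyset$, and then to do an element-by-element analysis inside $\D_n$ based on the two families of elements (rotations $r^k$ and reflections $sr^k$). Before the case split I would record the standard centralizer data: for a rotation, $\C_{\D_n}(r^k)=\D_n$ when $r^k$ is central (equivalently, $n\mid 2k$) and $\C_{\D_n}(r^k)=\langle r\rangle$ otherwise; for a reflection, $\C_{\D_n}(sr^k)$ is always a proper subgroup of $\D_n$ (of order $2$ if $n$ is odd and order $4$ if $n$ is even).

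For the direction $p\mid n\Rightarrow$ colorability, the witness is $u=r^{n/p}$: it is a non-central rotation, so $\C_{\D_n}(u)=\langle r\rangle$, while $u^p=r^n=1$ gives $\C_{\D_n}(u^p)=\D_n$. Any reflection then sits in $\C_{\D_n}(u^p)\setminus\C_{\D_n}(u)$, and colorability follows from Theorem~\ref{thm:char_kmp}. Equivalently, this is Remark~\ref{rmk:fast_trick} applied to the non-central element $u$ of order~$p$.

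For the converse, suppose some $u\in\D_n$ witnesses the centralizer inequality. If $u=sr^k$ is a reflection it is an involution, so for odd $p$ one has $u^p=u$, forcing $\C_{\D_n}(u^p)=\C_{\D_n}(u)$ and contradicting the hypothesis; hence any witness must be a rotation $u=r^k$. Both $\C(r^k)$ and $\C(r^{kp})$ then belong to the two-term chain $\{\langle r\rangle,\D_n\}$, and the strict inclusion $\C(u)\subsetneq\C(u^p)$ forces $\C(r^k)=\langle r\rangle$ and $\C(r^{kp})=\D_n$; arithmetically, $n\nmid 2k$ but $n\mid 2kp$. Setting $d=\gcd(n,2k)$, these read $n/d>1$ and $(n/d)\mid p$, and the primality of $p$ collapses this to $n/d=p$, so $p\mid n$.

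The main obstacle is a clean handling of the reflection case, because the identity $u^p=u$ used to dismiss reflections tacitly assumes $p$ to be odd: for $p=2$, every reflection is mapped to the identity by the $p$-th power and would apparently satisfy the centralizer condition whenever $\D_n$ is non-abelian. I would resolve this sub-case by invoking the classification $\K(m,2)\simeq\K(2,m)$ of Theorem~\ref{thm:classification_torus_knots} together with the reductions of the previous subsection (Theorem~\ref{thm:kmn_iff_kpq}), replacing $p=2$ by an odd prime factor of $m$ and returning to the rotation analysis already carried out.
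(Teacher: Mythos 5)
Your argument for odd $p$ coincides with the paper's: both directions go through Theorem \ref{thm:char_kmp}, with the rotation $r^{n/p}$ as witness for the forward implication and the rotation/reflection case split with the two-term centralizer chain $\{\langle r\rangle,\D_n\}$ for the converse. Your bookkeeping via $d=\gcd(n,2k)$ is in fact slightly more explicit than the paper, which simply records that $\C_{\D_n}(r^k)=\C_{\D_n}(r^{pk})$ whenever $p\nmid n$.

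The obstacle you flag at $p=2$ is genuine, and it is worth being blunt about what it means: it is not a sub-case you can patch, because the statement itself fails there. Take $n=3$, $m=3$: $\K(3,2)$ is the trefoil and $\D_3\cong\S_3$, and the paper's own example exhibits a non-trivial $\conj(\S_3)$-coloring of $\K(2,3)\simeq\K(3,2)$; more directly, $u=s$ gives $\C_{\D_3}(u^2)=\C_{\D_3}(1)=\D_3\supsetneq\C_{\D_3}(s)$, so Theorem \ref{thm:char_kmp} applies. Yet $2\nmid 3$. (This also puts Theorem \ref{thm:Dn_characterization_coloring} in conflict with Theorem \ref{thm:Sn_characterization_coloring} at $n=3$, $p=2$.) The paper's proof silently assumes $p$ odd in exactly the two places you identify: the claim that $r^{n/p}$ central would force ``$p=2$, which is excluded,'' and the claim that an involution $u$ satisfies $\C_{\D_n}(u^p)=\C_{\D_n}(u)$. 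Your proposed repair --- pass to $\K(2,m)$ and reduce to odd prime factors of $m$ via Theorem \ref{thm:kmn_iff_kpq} --- is the right manoeuvre, but note where it lands: it shows that $\K(m,2)$ is $\conj(\D_n)$-colorable if and only if some (odd) prime factor of $m$ divides $n$, which is not the condition $2\mid n$. So it cannot rescue the stated equivalence; the correct conclusion is that the theorem needs the hypothesis that $p$ is odd (with $p=2$ characterized separately as above), and under that hypothesis your proof is complete and correct.
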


\begin{proof}
    Assume $p\nmid n$. Then there's an element $u=r^{\frac{n}{p}}\in\D_n$ of order $p$. It is well known that 
    \[  
        \zen(\D_n)=\left\langle
        \begin{array}{lr}
            \{1\}  &n\mbox{ odd}\\
            \{1,r^{\frac{n}{2}}\}  &n\mbox{ even}
        \end{array}
        \right..
    \] 
    If $n$ is odd, then trivially $u\not\in\zen(\D_n)$. If $n$ is even, we also have $u\not\in\zen(\D_n)$, because if that was the case, we would have $\frac{n}{2}=\frac{n}{p}$, implying $p=2$, which is excluded. By Remark \ref{rmk:fast_trick}, we conclude that $\K(m,p)$ is $\conj(\D_n)$-colorable.

    Conversely, we assume that $p\nmid n$ and prove that $\K(m,p)$ is not $\conj(\D_n)$-colorable. By Theorem \ref{thm:char_kmp}, it is enough to check that the condition $\C_{\D_n}(u^p)\setminus\C_{\D_n}(u)\neq\emptyset$ never holds. Recall that every element of $u\in\D_n$ may be uniquely expressed as $u=s^tr^k$ for some $t\in\{0,1\}$ and $k\in\{0,\dots,n-1\}$. For $t=1$, then $u$ is an involution, hence $\C_{\D_n}(u^p)=\C_{\D_n}(u)$. Assume $t=0$, that is $u=r^k$. A direct computation shows that
    \[  
        \C_{\D_n}(r^k)=\C_{\D_n}(r^{pk})=\left\langle
        \begin{array}{lr}
            \langle r\rangle  &k\neq\frac{n}{2}\\
            \D_n  &k=\frac{n}{2}
        \end{array}
        \right.
    \] 
    therefore, also in this case, $\C_{\D_n}(u^p)\setminus\C_{\D_n}(u)\neq\emptyset$ does not hold.
\end{proof}

\subsection[Symmetric group]{Symmetric group}

We conclude this section and the paper with a discussion on the symmetric groups. We denote the symmetric group over $n$ letters as ${\S_n}$. Our goal is, again, to derive a numerical characterization for the colorability of $\K(m,p)$ solely in terms of $p$ and $n$.

\begin{theorem}[$\conj(\S_n)$-coloring of Torus Knots] \label{thm:Sn_characterization_coloring}
    Let $m,p\in\N$ be such that $m\ge 2$ and $p$ is prime with $p\nmid m$. The torus knot $\K(m,p)$ is $\conj(\S_n)$-colorable if and only if $p\le n$.
\end{theorem}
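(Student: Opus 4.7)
The plan is to invoke the characterization in Theorem \ref{thm:char_kmp}: $\K(m,p)$ is $\conj(\S_n)$-colorable if and only if some $u \in \S_n$ satisfies $\C_{\S_n}(u^p) \setminus \C_{\S_n}(u) \neq \emptyset$. Both directions will reduce to an elementary analysis of cycle lengths and orders in $\S_n$, with no case-by-case treatment of conjugacy classes needed.

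For the forward direction, assume $p \le n$. I would exhibit a single witness via Remark \ref{rmk:fast_trick}, namely a $p$-cycle $u = (1\ 2\ \cdots\ p) \in \S_n$, which has order exactly $p$. Provided $n \ge 3$ (implicitly required, since for $n \le 2$ the group $\S_n$ is abelian and no non-trivial $\conj(\S_n)$-coloring can exist), the center $\zen(\S_n)$ is trivial, so $u \notin \zen(\S_n)$ and Remark \ref{rmk:fast_trick} immediately yields the colorability of $\K(m,p)$.

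For the converse, assume $p > n$. I would prove directly that $\C_{\S_n}(u^p) = \C_{\S_n}(u)$ for every $u \in \S_n$, which rules out colorability by Theorem \ref{thm:char_kmp}. Write $u$ as a product of disjoint cycles; its order $|u|$ is the least common multiple of its cycle lengths, each of which is at most $n < p$. Since $p$ is prime and divides none of those cycle lengths, we get $p \nmid |u|$, hence $\gcd(p,|u|) = 1$. Consequently $u^p$ is another generator of the cyclic group $\langle u\rangle$, so $\langle u\rangle = \langle u^p\rangle$ and the two elements have the same centralizer. Applying Theorem \ref{thm:char_kmp}, no non-trivial $\conj(\S_n)$-coloring of $\K(m,p)$ exists.

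I expect the whole proof to be short and essentially mechanical; the only subtleties are the edge case $n \le 2$ (which the statement handles only modulo the implicit convention that $\S_n$ is non-abelian) and the small observation that $u$ and $u^p$ share centralizers as soon as they generate the same cyclic subgroup.
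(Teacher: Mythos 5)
Your proof is correct and follows the same overall strategy as the paper: the forward direction is identical (a non-central element of order $p$ obtained from $p\le n$, fed into Remark \ref{rmk:fast_trick}), and the converse reduces, as in the paper, to showing $\C_{\S_n}(u^p)=\C_{\S_n}(u)$ for every $u$ when $p>n$. The one genuine difference is how that equality is established. The paper argues that each disjoint cycle of $u$ has length $r\le n<p$, so $\sigma^p$ is again an $r$-cycle; hence $u$ and $u^p$ share a cycle type, are conjugate, have centralizers of equal cardinality, and since $\C_{\S_n}(u)\le\C_{\S_n}(u^p)$ always holds, the two coincide. You instead observe that every cycle length is $<p$, so $p\nmid|u|$, hence $\gcd(p,|u|)=1$ and $\langle u^p\rangle=\langle u\rangle$, which gives $\C_{\S_n}(u^p)=\C_{\S_n}(u)$ immediately. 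Your route is cleaner: it avoids the detour through conjugacy and the counting step, and it makes no use of the symmetric-group structure beyond bounding the element orders (indeed it shows the converse for any finite group of order coprime to $p$, or exponent not divisible by $p$). Your caveat about $n\le 2$ is also apt and not addressed in the paper: for $n=2$ and $p=2$ the group $\S_2$ is abelian, all centralizers are the whole group, and Theorem \ref{thm:char_kmp} shows $\K(m,2)$ is \emph{not} $\conj(\S_2)$-colorable, so the stated equivalence silently requires $n\ge 3$ (or the exclusion of the abelian case).
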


\begin{proof}
    If $p\le n$, then $p\mid n!=|\S_n|$, hence $\S_n$ has an element of order $p$, which is not in the centre because $\zen(\S_n)=\{1\}$. Remark \ref{rmk:fast_trick} allows us to conclude that $\K(m,p)$ is $\conj(\S_n)$-colorable.

    Conversely, we assume that $p<n$ and prove that $\K(m,p)$ is not $\conj(\D_n)$-colorable. By Theorem \ref{thm:char_kmp}, it is enough to check that the condition $\C_{\S_n}(u^p)\setminus\C_{\S_n}(u)\neq\emptyset$ never holds. Let $u\in\S_n$ and consider its complete factorization in disjoint cycles $u=\sigma_1\dots\sigma_t$, for some $t\ge 1$. Since disjoint cycles commute, we have $u^p=\sigma_1^p\dots\sigma_t^p$. In particular, since $p<n$ prime, if $\sigma$ is an $r$-cycle, then also $\sigma^p$ is an $r$-cycle. This implies that $u$ and $u^p$ have the same cycle structure, hence they are conjugate in $\S_n$. Consider $\tau\in\s_n$ such that $u^p=\tau u\tau\mo$. Then $\C_{\S_n}(u^p)=\C_{\S_n}(\tau u\tau\mo)=\tau\C_{\S_n}(u)\tau\mo$, hence $|\C_{\S_n}(u^p)|=|\C_{\S_n}(u)|$. Since it always hold that $\C_{\S_n}(u)\le\C_{\S_n}(u^p)$, the two centralisers must be equal, hence $\K(m,p)$ is not $\conj(\S_n)$-colorable if $p<n$.
\end{proof}

\section{Conclusions} \label{sec:conclusions}
We end this paper by posing a few questions for potential future research. Through applying the broad description of torus knots, we have derived characterization theorems for coloring torus knots by employing conjugation quandles with specific groups. Is it possible to extend this approach to additional small groups?

\begin{problem}
    Characterize the conjugation quandle coloring of $\K(m,n)$ using other small groups.
\end{problem}

Moreover, there exists a knot-theoretical tool that enables the association of a polynomial with any given knot, encoding certain properties. This technique is known as the Alexander polynomial (see \cite{murasugi1996knot}). The Alexander polynomials of torus knots are well-understood and easy to manipulate, involving specific divisibility conditions on their parameters. It's only natural to inquire whether there is a correlation between Alexander polynomials and colorings.

\begin{problem}
    What are the relations (if any) between the conjugation quandle coloring of $\K(m,n)$ and its Alexander polynomial?
\end{problem}

Furthermore, a well-known family of satellite knots is the one consisting of Whitehead doubles. For a given knot $K$, its Whitehead double $\W(K)$ is constructed by duplicating its arcs and introducing two additional crossings (see \cite{Jang}). Is it feasible to formulate a characterization theorem for the quandle colorability of $\W(\K(m,n))$ using a strategy akin to what we achieved in Theorem \ref{thm:char_kmp}? Ideally, this approach would begin by simplifying the task, initially omitting divisors and following a similar pattern as seen in Theorem \ref{thm:kmn_iff_kpq}.

\begin{problem}
    Characterize the conjugation quandle colorability of the Whitehead double of $\K(m,n)$.
\end{problem}

\begin{acknowledgements}
    This paper is built upon research carried out during the Ph.D. studies of the author, who received partial support from both the GAUK grant (301-10/252012) and Z. Patáková's Primus grant (301-45/247107). Furthermore, the author wishes to express gratitude to their Ph.D. advisor, David Stanovský, for his patient guidance and inquisitive approach. Additionally, appreciation is extended to Petr Vojtěchovský for offering valuable insights that have enriched specific prior results.
\end{acknowledgements}

\nocite{*}
\printbibliography

@manual{GAP4,
    key          = "GAP",
    author = "GAP",
    organization = "The {GAP~Group}",
    title        = "{GAP -- Groups, Algorithms, and Programming, Version 4.12.2}",
    year         = 2022,
    url = {https://www.gap-system.org} 
}

@article {Zhou,
    AUTHOR = {Zhou, Boxin and Liu, Ximin},
     TITLE = {Quandle coloring quivers of {$(p,3)$}-torus links},
   JOURNAL = {J. Knot Theory Ramifications},
  FJOURNAL = {Journal of Knot Theory and its Ramifications},
    VOLUME = {32},
      YEAR = {2023},
    NUMBER = {3},
     PAGES = {Paper No. 2350016, 23},
      ISSN = {0218-2165,1793-6527},
   MRCLASS = {57K12 (57K10)},
  MRNUMBER = {4581230},
       DOI = {10.1142/S0218216523500165},
       URL = {https://doi.org/10.1142/S0218216523500165},
}

@article {Basi,
    AUTHOR = {Basi, Jagdeep and Caprau, Carmen},
     TITLE = {Quandle coloring quivers of {$(p,2)$}-torus links},
   JOURNAL = {J. Knot Theory Ramifications},
  FJOURNAL = {Journal of Knot Theory and its Ramifications},
    VOLUME = {31},
      YEAR = {2022},
    NUMBER = {9},
     PAGES = {Paper No. 2250057, 14},
      ISSN = {0218-2165,1793-6527},
   MRCLASS = {57K10 (57K12)},
  MRNUMBER = {4475496},
MRREVIEWER = {Leandro\ Vendramin},
       DOI = {10.1142/S0218216522500572},
       URL = {https://doi.org/10.1142/S0218216522500572},
}

@article {Asami,
    AUTHOR = {Asami, Soichiro and Kuga, Ken'ichi},
     TITLE = {Colorings of torus knots and their twist-spuns by {A}lexander
              quandles over finite fields},
   JOURNAL = {J. Knot Theory Ramifications},
  FJOURNAL = {Journal of Knot Theory and its Ramifications},
    VOLUME = {18},
      YEAR = {2009},
    NUMBER = {9},
     PAGES = {1259--1270},
      ISSN = {0218-2165,1793-6527},
   MRCLASS = {57M25 (57Q45)},
  MRNUMBER = {2569560},
MRREVIEWER = {Gabriela\ Hinojosa},
       DOI = {10.1142/S0218216509007452},
       URL = {https://doi.org/10.1142/S0218216509007452},
}

@book{elhamdadi2015quandles,
  title={Quandles},
  author={Elhamdadi, Mohamed and Nelson, Sam},
  volume={74},
  year={2015},
  publisher={American Mathematical Soc.}
}

@book{murasugi1996knot,
  title={Knot Theory and Its Applications},
  author={Murasugi, K.},
  isbn={9783764338176},
  lccn={96016329},
  url={https://books.google.cz/books?id=n1DvAAAAMAAJ},
  year={1996},
  publisher={Birkh{\"a}user Boston}
}

@article {Kuperberg,
    AUTHOR = {Kuperberg, Greg},
     TITLE = {Knottedness is in NP, modulo {GRH}},
   JOURNAL = {Adv. Math.},
  FJOURNAL = {Advances in Mathematics},
    VOLUME = {256},
      YEAR = {2014},
     PAGES = {493--506},
      ISSN = {0001-8708,1090-2082},
   MRCLASS = {57M25 (11M26 68Q25)},
  MRNUMBER = {3177300},
MRREVIEWER = {Jonathan\ Spreer},
       DOI = {10.1016/j.aim.2014.01.007},
       URL = {https://doi.org/10.1016/j.aim.2014.01.007},
}

@article{Jang,
title = {The canonical genus for Whitehead doubles of a family of alternating knots},
journal = {Topology and its Applications},
volume = {159},
number = {17},
pages = {3563-3582},
year = {2012},
issn = {0166-8641},
doi = {https://doi.org/10.1016/j.topol.2012.08.017},
url = {https://www.sciencedirect.com/science/article/pii/S0166864112003409},
author = {Hee Jeong Jang and Sang Youl Lee},
keywords = {Alternating knot, Crossing number, Canonical genus, 2-Bridge knot, Mortonʼs inequality, Pretzel knot, Quasitoric braid, Whitehead double, Trippʼs conjecture},
abstract = {For any given integer r⩾1 and a quasitoric braid βr=(σr−ϵσr−1ϵ⋯σ1(−1)rϵ)3 with ϵ=±1, we prove that the maximum degree in z of the HOMFLYPT polynomial PW2(βˆr)(v,z) of the doubled link W2(βˆr) of the closure βˆr is equal to 6r−1. As an application, we give a family K3 of alternating knots, including (2,n)-torus knots, 2-bridge knots and alternating pretzel knots as its subfamilies, such that the minimal crossing number of any alternating knot in K3 coincides with the canonical genus of its Whitehead double. Consequently, we give a new family K3 of alternating knots for which Trippʼs conjecture holds.}
}

@article {Clark,
    AUTHOR = {Clark, W. Edwin and Elhamdadi, Mohamed and Saito, Masahico and
              Yeatman, Timothy},
     TITLE = {Quandle colorings of knots and applications},
   JOURNAL = {J. Knot Theory Ramifications},
  FJOURNAL = {Journal of Knot Theory and its Ramifications},
    VOLUME = {23},
      YEAR = {2014},
    NUMBER = {6},
     PAGES = {1450035, 29},
      ISSN = {0218-2165,1793-6527},
   MRCLASS = {57M25 (57M27)},
  MRNUMBER = {3253967},
MRREVIEWER = {Blake\ Mellor},
       DOI = {10.1142/S0218216514500357},
       URL = {https://doi.org/10.1142/S0218216514500357},
}

@article {Iwakiri,
    AUTHOR = {Iwakiri, Masahide},
     TITLE = {Quandle cocycle invariants of torus links},
 BOOKTITLE = {Intelligence of low dimensional topology 2006},
    SERIES = {Ser. Knots Everything},
    VOLUME = {40},
     PAGES = {57--64},
 PUBLISHER = {World Sci. Publ., Hackensack, NJ},
      YEAR = {2007},
      ISBN = {978-981-270-593-8},
   MRCLASS = {57M25 (57M27)},
  MRNUMBER = {2371709},
       DOI = {10.1142/9789812770967\_0008},
       URL = {https://doi.org/10.1142/9789812770967_0008},
}

@article {Fish,
    AUTHOR = {Fish, Andrew and Lisitsa, Alexei and Stanovsk\'{y}, David and
              Swartwood, Sarah},
     TITLE = {Efficient knot discrimination via quandle coloring with {SAT}
              and \#{SAT}},
 BOOKTITLE = {Mathematical software---{ICMS} 2016},
    SERIES = {Lecture Notes in Comput. Sci.},
    VOLUME = {9725},
     PAGES = {51--58},
 PUBLISHER = {Springer, [Cham]},
      YEAR = {2016},
      ISBN = {978-3-319-42432-3},
   MRCLASS = {68T20 (57M27)},
  MRNUMBER = {3662298},
       DOI = {10.1007/978-3-319-42432-3},
       URL = {https://doi.org/10.1007/978-3-319-42432-3},
}

\end{document}